\newtheorem{prethm}{{\bf Theorem}}
\newenvironment{thm}{\begin{prethm}{\hspace{-0.5
               em}{\bf.}}}{\end{prethm}}
\newtheorem{prepro}[prethm]{Proposition}
\newtheorem{prelem}[prethm]{Lemma}
\newenvironment{lem}{\begin{prelem}{\hspace{-0.5
               em}{\bf.}}}{\end{prelem}}
\newtheorem{precor}[prethm]{Corollary}
\newtheorem{prerem}[prethm]{{\bf Remark}}
\newenvironment{rem}{\begin{prerem}\em{\hspace{-0.5
              em}{\bf.}}}{\end{prerem}}
\newtheorem{preexample}{{\bf Example}}
\newtheorem{preproof}{{\bf Proof.}}
\newenvironment{proof}[1]{\begin{preproof}{\rm
               #1}\hfill{$\Box$}}{\end{preproof}}
\newcommand{\noi}{\noindent}
\newcommand{\e}{\epsilon}
\newcommand{\m}{{\rm mult}}
\newcommand{\g}{{\cal G}}
\newcommand{\h}{{\cal H}}
\newcommand{\al}{\alpha}
\newcommand{\be}{\beta}
\newcommand{\si}{\simeq}
\renewcommand{\thefootnote}
\title{Graphs with few matching roots}
\author{\sc Ebrahim Ghorbani\\
{\small {Department of Mathematics, K.N. Toosi University of Technology,}}\\
{\small P.O. Box 16315-1618, Tehran, Iran}\\
 {\small { School of Mathematics, Institute
for Research in Fundamental Sciences (IPM),}} \\
 {\small { P.O. Box 19395-5746, Tehran, Iran}}\\
 {\tt\small e\_ghorbani@ipm.ir}}
\begin{document}
\maketitle

\begin{abstract}
We determine all graphs whose matching polynomials have at most five distinct zeros. As a consequence, we find  new families of graphs which are determined by their matching polynomials.

\vspace{3mm}
\noindent {\em AMS Classification}: 05C31; 05C70\\
\noindent{\em Keywords}: Matching polynomial; Matching unique graph; Comatching graphs; Friendship graph
\end{abstract}

\section{Introduction}

All the graphs that we consider in this paper are finite,
simple and undirected. Let $G$ be  a graph. Throughout this paper
the {\it order} of $G$ is the number of vertices of $G$.
 A {\em $k$-matching} in $G$ is a set of $k$ pairwise nonincident edges and the number of $k$-matchings
in $G$ is denoted by  $m(G,k)$. If $G$ is of order $n$,  the {\em matching polynomial}
$\mu(G, x)$ is defined by
$$\mu(G, x)=\sum_{k\ge0}(-1)^km(G,k)x^{n-2k},$$
where $m(G, 0)$ is considered to be 1.
The roots of matching polynomial of any graph are all real numbers. (This was first proved independently in \cite{hl1} and \cite{ku}.)
 The matching polynomial is related to the characteristic polynomial
 of $G$, which is defined to be the characteristic polynomial of the adjacency matrix
of $G$. In particular these two coincide if and only if $G$ is a forest \cite{gg}. Also the
matching polynomial of any connected graph is a factor of the characteristic polynomial
of some tree (see \cite[Theorem~6.1.1]{gbook}). This is another way to see that the roots of matching polynomial are real numbers because the adjacency matrix of any graph is a symmetric matrix and so the roots of its characteristic polynomial are real numbers.
The roots of $\mu(G,x)$ are called the {\em matching roots} of $G$.
Two nonisomorphic graphs with the same matching polynomials are said to be {\em comatching}.
A graph $G$ is said to be {\em matching unique} if it has no comatching graph.
We also denote the multiset of the roots of the matching polynomial of $G$ by $R(G)$. We use exponent symbol to show the multiplicities of the elements of $R(G)$.

The determination of graphs with  few distinct roots of characteristic polynomials of matrices associated to graphs (i.e. graphs with few distinct eigenvalues) have been the subject of many researches.
Graphs with three adjacency eigenvalues have been studied by Bridges and
Mena \cite{bm}, Klin and Muzychuk \cite{km}, and van Dam \cite{dam1, dam2}.
Connected regular graphs with four distinct adjacency eigenvalues have
been studied by Doob \cite{do1, do2}, van Dam \cite{dam1}, and van Dam and Spence \cite{ds}.
Graphs with three Laplacian eigenvalues have been treated by van Dam and Haemers \cite{dh}.
 Ayoobi, Omidi and Tayfeh-Rezaie \cite{aot} investigated nonregular
graphs whose signless Laplacian matrix has three distinct eigenvalues.
For a complete survey on this subject see Chapter~14 of Brouwer and Haemers \cite{bh}.

So far, few families of graphs have been shown to be matching unique; these include unique
cages (regular graphs with minimum number of vertices and given degree and girth), 2-regular graphs,
$mK_{r,r}$, $mL$, where $L$ is a unique Moore graph with given degree and odd girth,
and the regular complete multipartite graphs \cite{n}.  It is also known that if a graph is matching unique,
then its complement is also matching unique (see \cite{bf}).

In this paper, we determine all graphs with at most five distinct matching roots. As a result, we find  new families of matching unique graphs. In particular, we show that for any positive integer $n\ne2$, the friendship graph $F_n$ (the graph consisting of  $n$ triangles intersecting in a single vertex) is matching unique.

\section{Graphs with few matching roots}

We denote the complete graph of order $n$ by $K_n$ and the complete bipartite graph with parts of sizes $r$ and $s$ by $K_{r,s}$. The graph $K_{1,s}$ is called a {\em star}. The multiplicity of $\theta$ as a root of
$\mu(G,x)$ is  denoted by $\m(\theta,G)$.

The roots of the matching polynomial of any graph, like those of characteristic polynomial, have the ``interlacing'' property (\cite{hl2}, see also \cite[Corollary 6.1.3]{gbook}):

\begin{lem}\label{inter} Let $G$ be a graph and $u$ be a vertex of that. Then the roots of $\mu(G-u,x)$ interlace those of $\mu(G,x)$, i.e. if
$\theta_1\ge\theta_2\ge\cdots\ge\theta_n$ and $\eta_1\ge\eta_2\ge\cdots\ge\eta_{n-1}$ are the matching roots of $G$ and $G-u$, respectively, then
$$\theta_1\ge\eta_1\ge\theta_2\ge\eta_2\ge\cdots\ge\theta_{n-1}\ge\eta_{n-1}\ge\theta_n.$$
Consequently,  $\m(\theta,G)$ differs from $\m(\theta,G-u)$ by at most one.
\end{lem}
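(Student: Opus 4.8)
The plan is to reproduce the classical argument of Heilmann and Lieb (\cite{hl2}), which is also \cite[Corollary~6.1.3]{gbook}. The only combinatorial ingredient is the vertex-deletion recurrence
$$\mu(G,x)=x\,\mu(G-u,x)-\sum_{v\sim u}\mu(G-u-v,x),$$
obtained by splitting the $k$-matchings of $G$ according to whether they cover $u$ and, if so, through which neighbour of $u$; everything else is an analytic statement about the rational function $\mu(G-u,x)/\mu(G,x)$.

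The heart of the matter is the following claim, which I would prove by induction on $|V(G)|$: \emph{for every graph $H$, $\mu(H,z)\ne0$ whenever $\operatorname{Im}z>0$, and for every vertex $w$ of $H$ the function $\mu(H,z)/\mu(H-w,z)$ has strictly positive imaginary part on the open upper half-plane.} Assuming this for all graphs on fewer than $|V(G)|$ vertices, the recurrence gives
$$\frac{\mu(G,z)}{\mu(G-u,z)}=z-\sum_{v\sim u}\frac{\mu\big((G-u)-v,z\big)}{\mu(G-u,z)}.$$
By the inductive hypothesis applied to $G-u$ and its vertex $v$, the reciprocal $\mu\big((G-u)-v,z\big)/\mu(G-u,z)$ has negative imaginary part (recall $\operatorname{Im}w>0$ iff $\operatorname{Im}(1/w)<0$), while $\operatorname{Im}z>0$; hence the right-hand side has positive imaginary part, which in particular forbids $\mu(G,z)=0$ and closes the induction. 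The cases $|V(G)|\le1$ are immediate. As a by-product this re-proves that all matching roots are real, since $\mu(G,\cdot)$ has real coefficients and therefore cannot vanish off the real axis.

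From the claim, the lemma follows by elementary real analysis. Write $\mu(G-u,x)/\mu(G,x)=N(x)/D(x)$ in lowest terms. The half-plane property rules out non-real poles and poles of order $\ge2$ (near such a pole $\lambda$ the factor $(z-\lambda)^{-k}$ with $k\ge2$ has argument sweeping a full turn, so the imaginary part would change sign); thus $D$ has only simple real zeros $\lambda_1>\cdots>\lambda_t$. Evaluating near $\lambda_j+i\varepsilon$ shows each residue is positive, and since $\deg\mu(G-u)=\deg\mu(G)-1$ there is no polynomial part, so $N/D=\sum_{j=1}^t b_j/(x-\lambda_j)$ with every $b_j>0$ and $\deg N=t-1$. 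On the real line this function is strictly decreasing between consecutive poles, runs from $+\infty$ down to $-\infty$ across each bounded gap $(\lambda_{j+1},\lambda_j)$, and stays positive on $(\lambda_1,\infty)$ and negative on $(-\infty,\lambda_t)$; hence $N$ has exactly one simple zero in each gap and none elsewhere, so the zeros of $N$ and $D$ strictly interlace. Now $\mu(G,x)$ and $\mu(G-u,x)$ are recovered from $D$ and $N$ by multiplying through by their common cancelled factor, and inserting a common multiset of real numbers into two interlacing sequences keeps them interlacing (an easy induction on its size); this yields $\theta_1\ge\eta_1\ge\theta_2\ge\eta_2\ge\cdots\ge\theta_n$. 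Finally, if $\theta$ occurs with multiplicity $m$ in $R(G)$, these inequalities squeeze $m-1$ consecutive $\eta_i$ to equal $\theta$, so $\m(\theta,G-u)\ge m-1$; by symmetry $\m(\theta,G)\ge\m(\theta,G-u)-1$, which is the last assertion.

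The only genuinely delicate point I anticipate is setting up the inductive half-plane claim in the right form, for all graphs and all vertices simultaneously, so that the recurrence feeds it back into itself; after that the work is bookkeeping. An alternative that avoids complex analysis is Godsil's path-tree identity $\mu(G-u,x)/\mu(G,x)=\mu(T\setminus u,x)/\mu(T,x)$ with $T=T(G,u)$ a tree (\cite[Theorem~6.1.1]{gbook}): as $T$ is a forest, $\mu(T,x)$ and $\mu(T\setminus u,x)$ are the characteristic polynomials of the adjacency matrix of $T$ and of one of its principal submatrices, so Cauchy interlacing applies, and one transports the conclusion back to $G$ exactly as in the previous paragraph — at the cost of first constructing the path tree.
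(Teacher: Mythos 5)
The paper states this lemma without proof, merely citing Heilmann--Lieb and \cite[Corollary~6.1.3]{gbook}; your argument is a correct and complete reconstruction of exactly that standard proof (the upper-half-plane induction for $\mu(G,z)/\mu(G-u,z)$ via the vertex-deletion recurrence, then the positive-residue partial-fraction expansion forcing strict interlacing of the coprime parts, then reinsertion of the common real roots). All steps, including the sign checks on the residues and the final multiplicity deduction, are sound, so there is nothing to correct.
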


Gallai's Lemma in matching theory asserts that if a graph $G$ is connected and for each vertex $u$ of $G$, the size of maximum matchings of $G$ and $G-u$ are the same, then $G-u$ has a perfect matching. In the language of matching polynomial, this is equivalent to say that if $\m(0,G-u)<\m(0,G)$ for every vertex $u$ of $G$, then $\m(0,G)=1$.
This result has been extended to any root of matching polynomials and is quoted as ``an analogue of Gallai's Lemma'':

\begin{thm} {\rm(Ku and Chen \cite{kuchen})} For a connected graph $G$, if  $\m(\theta,G)\ge2$, then there is a vertex $u$ of $G$ such that $\m(\theta,G-u)\ge\m(\theta,G)$.
\end{thm}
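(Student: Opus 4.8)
The plan is to argue by contradiction, combining the interlacing of Lemma~\ref{inter} with the standard differentiation identity $\mu'(G,x)=\sum_{u\in V(G)}\mu(G-u,x)$ (Godsil \cite{gbook}). Put $m=\m(\theta,G)\ge 2$. By Lemma~\ref{inter}, $\m(\theta,G-u)\in\{m-1,m,m+1\}$ for every vertex $u$; if $\m(\theta,G-u)\ge m$ for some $u$ we are done, so suppose toward a contradiction that $\m(\theta,G-u)=m-1$ for all $u$. Write $\mu(G,x)=(x-\theta)^m p(x)$ with $p(\theta)\ne 0$ and, using the hypothesis, $\mu(G-u,x)=(x-\theta)^{m-1}q_u(x)$ with $q_u(\theta)\ne 0$. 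Substituting into the differentiation identity and cancelling $(x-\theta)^{m-1}$ gives $\sum_u q_u(x)=m\,p(x)+(x-\theta)p'(x)$, hence $\sum_u q_u(\theta)=m\,p(\theta)\ne 0$.

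The next step is a sign computation drawn purely from interlacing. Let $\theta_1\ge\cdots\ge\theta_n$ be the roots of $\mu(G,x)$ and let $r$ be the number of them exceeding $\theta$. Interlacing forces the middle roots $\eta_{r+1},\dots,\eta_{r+m-1}$ of $\mu(G-u,x)$ to equal $\theta$, while the contradiction hypothesis forbids $\theta$ from occurring elsewhere among the roots of $\mu(G-u,x)$; thus exactly $r$ roots of $\mu(G-u,x)$ exceed $\theta$. Writing $q_u(\theta)$ as the product of $\theta-\eta_i$ over the $n-m$ non-$\theta$ roots of $\mu(G-u,x)$, exactly $r$ factors are negative, so $\operatorname{sgn}q_u(\theta)=(-1)^r$ for every $u$; the same count gives $\operatorname{sgn}p(\theta)=(-1)^r$.

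At this point the relation $\sum_u q_u(\theta)=m\,p(\theta)$ is sign-consistent — every term has sign $(-1)^r$ — so the differentiation identity by itself cannot close the argument, and this is precisely where connectedness of $G$ must enter (for a disconnected graph with $c$ components the statement genuinely fails, with $m$ as large as $c$). I expect this to be the main obstacle. To finish I would pass to Godsil's path tree $T(G,u)$: since $\mu(G,x)$ divides the characteristic polynomial of $T(G,u)$, $\theta$ is an eigenvalue of the adjacency matrix of this tree with multiplicity $\ge m\ge 2$, so the Parter--Wiener theorem for symmetric matrices whose graph is a tree supplies a vertex of $T(G,u)$ whose deletion strictly raises the multiplicity of $\theta$; translating this vertex back to $G$ through the correspondence between the branches of $T(G,u)$ and the vertex-deleted subgraphs of $G$ should yield the required vertex. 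An alternative route, closer in spirit to the classical Gallai Lemma, is to take a shortest path between two vertices witnessing the putative drop in multiplicity and to propagate a contradiction along it via the vertex-reduction identity $\mu(G,x)=x\,\mu(G-u,x)-\sum_{v\sim u}\mu(G-u-v,x)$. Either way, the technical heart of the proof is forcing connectivity to interact with these otherwise individually sign-consistent identities.
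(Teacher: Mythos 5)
This theorem is quoted in the paper from Ku and Chen \cite{kuchen} without proof, so there is no in-paper argument to compare against; I am judging your proposal on its own terms. The first half is correct as far as it goes: the reduction to the case $\m(\theta,G-u)=m-1$ for all $u$, the use of $\mu'(G,x)=\sum_u\mu(G-u,x)$, and the interlacing sign count $\operatorname{sgn}q_u(\theta)=\operatorname{sgn}p(\theta)=(-1)^r$ are all right, and you correctly recognize that the resulting identity $\sum_u q_u(\theta)=m\,p(\theta)$ is sign-consistent and therefore proves nothing. But that means the entire burden falls on the second half, and that is where there is a genuine gap.

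The Parter--Wiener step does produce a vertex $w$ of the path tree $T(G,u)$ with $\m(\theta,T(G,u)-w)=\m(\theta,T(G,u))+1$, but $w$ is a vertex of $T(G,u)$, i.e.\ it corresponds to a \emph{path} in $G$ starting at $u$, not to a vertex of $G$. The branches of $T(G,u)$ at the root are the path trees $T(G-u,v)$ of $G-u$ rooted at neighbours $v$ of $u$ --- they are not the vertex-deleted subgraphs $G-v$ --- so the ``correspondence'' you invoke to translate $w$ back to $G$ does not exist in the form you need. The only case that translates directly is $w=u$, where Godsil's quotient identity $\m(\theta,G)-\m(\theta,G-u)=\m(\theta,T(G,u))-\m(\theta,T(G,u)-u)$ gives the conclusion; but Parter--Wiener gives you no control over where $w$ sits, and a deep Parter vertex tells you about deleting a long path from $G$, not a single vertex. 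Bridging this is precisely the technical content of Godsil's ``Algebraic matching theory'' (the classification of vertices as $\theta$-essential, $\theta$-positive, $\theta$-neutral) and of Ku and Chen's Gallai--Edmonds analogue; ``should yield the required vertex'' conceals rather than supplies that work. Note also that $\m(\theta,T(G,u))$ may greatly exceed $\m(\theta,G)$, which further decouples the tree-level multiplicity jump from the graph-level one. The alternative route via the recurrence $\mu(G,x)=x\,\mu(G-u,x)-\sum_{v\sim u}\mu(G-u-v,x)$ is likewise only a gesture. As it stands the proposal is a reasonable research plan, not a proof.
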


Godsil \cite{g} proved that if there exists two adjacent vertices $u,u'$ in $G$ such that $\m(\theta,G-u)\ge\m(\theta,G)>\m(\theta,G-u')$, then $\m(\theta,G-u)=\m(\theta,G)+1$.
He also observed that any graph $G$ has at least one vertex $v$ such that  $\m(\theta,G-v)=\m(\theta,G)-1$.
These results  together with the above theorem implies the following lemma.

\begin{lem}\label{mult} {\rm(\cite{kuchen})} For a connected graph $G$, if  $\m(\theta,G)\ge2$, then there is a vertex $u$ of $G$ such that $\m(\theta,G-u)=\m(\theta,G)+1$.
\end{lem}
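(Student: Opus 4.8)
The plan is to deduce the lemma from the three results quoted just before its statement — the Ku--Chen analogue of Gallai's Lemma, Godsil's ``walking'' result for adjacent vertices, and Godsil's observation that some vertex drops the multiplicity — together with the interlacing bound of Lemma~\ref{inter}.

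First I would invoke the Ku--Chen theorem: since $G$ is connected and $\m(\theta,G)\ge2$, there is a vertex $u$ with $\m(\theta,G-u)\ge\m(\theta,G)$. By Lemma~\ref{inter} the multiplicity of $\theta$ changes by at most one under deletion of a vertex, so $\m(\theta,G-u)$ equals either $\m(\theta,G)+1$ or $\m(\theta,G)$. In the former case we are done at once, so assume $\m(\theta,G-u)=\m(\theta,G)$.

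Next, by Godsil's observation there is a vertex $v$ with $\m(\theta,G-v)=\m(\theta,G)-1$, and in particular $v\neq u$. Using connectivity, choose a path $u=x_0,x_1,\dots,x_k=v$ in $G$ and let $i$ be the least index for which $\m(\theta,G-x_i)<\m(\theta,G)$; such an $i$ exists because $x_k=v$ works, and $i\ge1$ because $\m(\theta,G-x_0)=\m(\theta,G)$. Then $x_{i-1}$ and $x_i$ are adjacent and satisfy $\m(\theta,G-x_{i-1})\ge\m(\theta,G)>\m(\theta,G-x_i)$, so Godsil's result on adjacent vertices applies and yields $\m(\theta,G-x_{i-1})=\m(\theta,G)+1$, as required.

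There is essentially no obstacle here beyond bookkeeping: the lemma is a short synthesis of already-quoted facts, and the one point needing care is the final step, where Godsil's adjacent-vertices result — the genuine engine of the argument — can be applied only after connectivity has been used to exhibit two \emph{adjacent} vertices straddling the value $\m(\theta,G)$; the interlacing bound is then what forces the resulting jump to be exactly $+1$ rather than larger.
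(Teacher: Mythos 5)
Your argument is correct and is precisely the synthesis the paper intends: the paper states Lemma~\ref{mult} without written proof, remarking only that it follows from the Ku--Chen theorem together with Godsil's two results, and your write-up supplies exactly that deduction (Ku--Chen plus interlacing to get a vertex with multiplicity $\ge\m(\theta,G)$, Godsil's observation to get a vertex where it drops, and a path/first-failure argument to produce the adjacent straddling pair to which Godsil's $+1$ result applies). No gaps; the connectivity and minimal-index bookkeeping are handled correctly.
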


\begin{rem} Any graph with an odd number of vertices has a zero matching root and if $\theta$ is a matching root of a graph, then so is $-\theta$.
\end{rem}

\begin{lem}\label{-1} Let $G$ be connected graph. If the roots of $\mu(G,x)$ are  $\ge-1$, then $G\simeq K_2$.
\end{lem}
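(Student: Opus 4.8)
The plan is to argue by induction on the order of $G$, using vertex deletion and the interlacing property of Lemma~\ref{inter}. First I would dispose of the trivial small cases: if $G$ has one vertex, its only matching root is $0$, so $G$ is not $K_2$ but also the hypothesis is vacuous in a sense we must exclude — actually $K_1$ has root $0 \geq -1$, so I need to check whether the statement implicitly assumes $G$ has an edge; I would handle $K_1$ by noting it is excluded (a single vertex has no edges, and presumably the intended reading is that $G$ has at least one edge, or one simply observes $K_1$ is a degenerate exception). For $G = K_2$ the matching roots are $\pm 1$, both $\geq -1$, giving the base case. The real content is: no connected graph on $\geq 3$ vertices has all matching roots $\geq -1$.

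For the inductive step, suppose $G$ is connected with $n \geq 3$ vertices and all matching roots $\geq -1$. I would first show $G$ must contain a cycle or, more usefully, bound things via the smallest root. A clean approach: the smallest matching root $\theta_n$ satisfies $\theta_n \geq -1$, but for any graph the matching polynomial relates to that of a tree, and for a connected graph on $\geq 3$ vertices I claim the smallest matching root is $< -1$. One way: pick a vertex $u$; by Lemma~\ref{inter}, the matching roots of $G - u$ also lie in $[-1, \infty)$... but $G - u$ may be disconnected, so I would instead take a path $P_3$ as a subgraph (which exists since $n \geq 3$ and $G$ is connected) and use the fact that the matching polynomial of a path $P_3$ is $x^3 - 2x = x(x^2-2)$, with smallest root $-\sqrt{2} < -1$. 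The key tool here is that if $H$ is a subgraph obtained by deleting vertices, interlacing gives a root of $\mu(G,x)$ that is $\leq$ the smallest root of $\mu(H,x)$; iterating Lemma~\ref{inter} down to a $P_3$ inside $G$ forces $\theta_n \leq -\sqrt{2}$, contradicting $\theta_n \geq -1$.

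The main obstacle is making the "delete down to $P_3$" step rigorous: I need a connected induced subpath (or path subgraph) $P_3$ and a sequence of single-vertex deletions realizing it, so that repeated application of Lemma~\ref{inter} yields $\theta_n(G) \leq \theta_3(P_3) = -\sqrt{2}$. Since $G$ is connected on $\geq 3$ vertices it has a spanning tree, hence a path on $3$ vertices as a subgraph; deleting the other $n - 3$ vertices one at a time, interlacing at each step pushes the minimum root down, and the matching polynomial only gains edges can only be handled carefully — one should delete vertices from $G$ itself (not pass to a subgraph), and then observe $\mu$ of the final $3$-vertex graph is either $x^3 - 2x$ or $x^3 - 3x$ (for $P_3$ or $K_3$) or $x^3 - x$ (for $P_2 \cup K_1$) — in the first two cases the smallest root is $-\sqrt 2$ or $-\sqrt 3$, both $< -1$. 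I would arrange the deletion order so that connectivity is maintained until $3$ vertices remain, guaranteeing the final graph has at least $2$ edges, hence smallest matching root $\leq -\sqrt{2} < -1$, the desired contradiction. Thus $n \leq 2$, and combined with connectivity and the presence of an edge, $G \simeq K_2$.
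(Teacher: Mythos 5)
Your argument is correct and essentially the same as the paper's: both rest on iterating the interlacing lemma down to a connected three-vertex induced subgraph ($P_3=K_{1,2}$ or $K_3$), whose smallest matching root is $-\sqrt{2}$ or $-\sqrt{3}$, forcing the smallest root of $G$ below $-1$ when $G$ has at least three vertices. (The paper phrases this as ``no induced $K_{1,2}$, hence $G$ is complete, hence contains $K_3$,'' but the tool and the computations are identical; your remark that $K_1$ is a degenerate exception to the literal statement applies equally to the paper's proof.)
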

\begin{proof}{Let $G$ be of order $n$. Since the roots of $\mu(K_{1,2},x)$ are $\{0,\pm\sqrt{2}\}$, by interlacing, $G$ has no $K_{1,2}$ as an induced subgraph. Thus $G$ must be the complete graph $K_n$.  If $n\ge3$, then, by interlacing, $\mu(K_n,x)$ has a root $\le-\sqrt{3}$ because $\mu(K_3,x)=x^3-3x$. This implies  $G\simeq K_2$.}
\end{proof}

We now define two families of graphs which will be used later.

\noi{\bf Definition.} We add a single vertex $u$ to the graph $rK_{1,k}\cup tK_1$ and join $u$ to the other vertices by $p$ edges so that the resulting graph is connected and $u$ is adjacent with exactly $q$ centers of the stars (for $K_{1,1}$ either of the vertices is considered as center).
 Clearly
  \begin{equation}\label{rtpk}
r+t\le p\le r(k+1)+t~~\hbox{and}~~~0\le q\le r.
\end{equation}
 We denote the set of graphs obtained in this way by $\g(r,k,t;p,q)$. See Figure~\ref{s(r,p,q)}.
   For any $G\in\g(r,3,t;p,q)$, we add $s$ copies of $K_3$ to $G$ and join them by $\ell$ edges to the vertex $u$ of $G$ to make a connected graph. Clearly $s\le \ell\le3s$.
  We denote the set of these graphs  by $\h(r,s,t;p,q,\ell)$.
\begin{figure}
\centering \includegraphics[width=7cm]{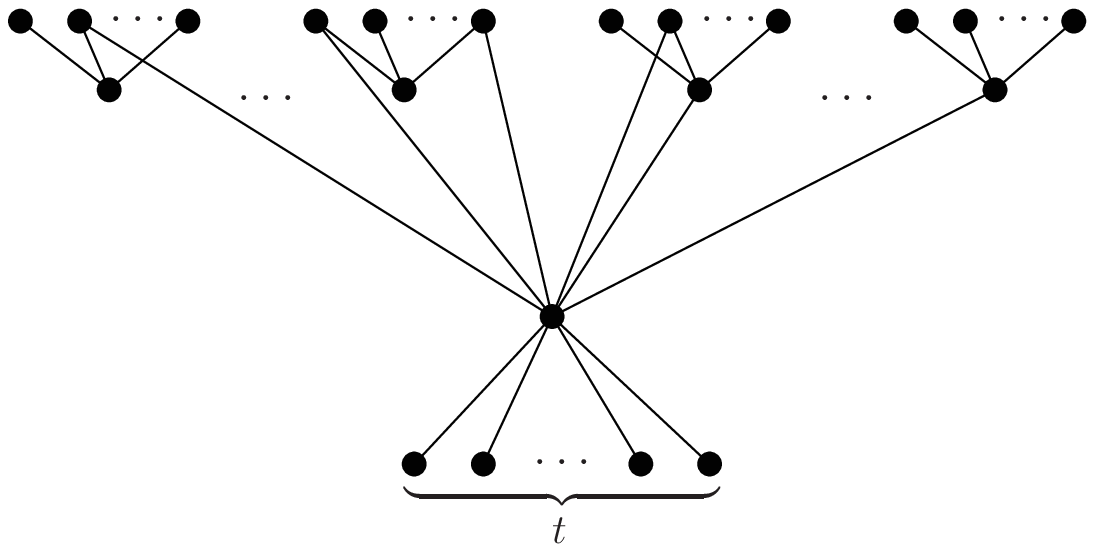}\\
  \caption{A typical graph in the family $\g(r,k,t;p,q)$: the central vertex has degree $p$ and it is joined to $q$ centers of the stars $K_{1,k}$.}\label{s(r,p,q)}
  \end{figure}

\begin{lem}\label{s(r,t)} For every $G\in\g(r,k,t;p,q)$, $$\mu(G,x)=x^{r(k-1)+t-1}(x^2-k)^{r-1}\left(x^4-(p+k)x^2+(p-q)(k-1)+t\right).$$
\end{lem}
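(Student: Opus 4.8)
The plan is to compute $\mu(G,x)$ by systematically stripping off the pendant structure using the standard edge- and vertex-deletion recurrences for matching polynomials. Recall the two identities available from the theory cited earlier: for an edge $uv$, $\mu(G,x)=\mu(G-uv,x)-\mu(G-u-v,x)$, and for disjoint unions $\mu(G_1\cup G_2,x)=\mu(G_1,x)\mu(G_2,x)$; also for a pendant vertex $v$ with neighbor $w$, $\mu(G,x)=x\,\mu(G-v,x)-\mu(G-v-w,x)$. First I would record the matching polynomials of the pieces: $\mu(K_1,x)=x$, $\mu(K_{1,k},x)=x^{k+1}-kx^{k-1}=x^{k-1}(x^2-k)$, and the polynomial of a star with a deleted leaf is just $x\cdot x^{k-1}=x^k$ (an isolated vertex times $K_1^{\,k-1}$, wait—more precisely $\mu(K_{1,k}-\text{leaf},x)=\mu(K_{1,k-1}\cup K_1,x)=x\cdot x^{k-2}(x^2-(k-1))$). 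Keeping careful track of these small pieces is the bookkeeping core of the argument.

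The key step is to expand along the vertex $u$. Deleting $u$ disconnects $G$ into $rK_{1,k}\cup tK_1$, whose matching polynomial is $x^t\bigl(x^{k-1}(x^2-k)\bigr)^r=x^{r(k-1)+t}(x^2-k)^r$. For the terms where $u$ is present, I would use the general expansion of $\mu(G,x)$ in terms of paths through $u$: $\mu(G,x)=x\,\mu(G-u,x)-\sum_{v\sim u}\mu(G-u-v,x)$ is not quite enough, so instead I would group the neighbors of $u$ by which star-component they lie in and which vertex (center vs. leaf) they are. Since $u$ has $q$ edges to centers and $p-q$ edges to leaves (spread over the $r$ stars and $t$ isolated vertices, but only the center/leaf distinction matters for the polynomial), each contribution $\mu(G-u-v,x)$ factors as the product of the matching polynomial of the component containing $v$ with $v$ and $u$'s incident edges removed, times $\mu$ of all the untouched components. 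Because removing a leaf from $K_{1,k}$ leaves $K_{1,k-1}\cup K_1$ and removing the center leaves $(k+1)K_1$, and removing an isolated vertex leaves nothing, the sum over the $p$ neighbors collapses into just a few distinct terms with multiplicities $q$, $p-q$ (leaves inside stars), and absorbed constants; after factoring out the common $x^{r(k-1)+t-1}(x^2-k)^{r-1}$ one is left with a degree-$4$ polynomial in $x$. I expect the quartic to come out as $x^4-(p+k)x^2+(p-q)(k-1)+t$ after collecting the coefficient of each power, using identities like $x\cdot x^{k-1}(x^2-k)\cdot x - (x^2-k)\cdot(\text{leaf term})$ telescoping correctly.

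The main obstacle will be organizing the neighbor-sum cleanly: a priori the $p$ edges from $u$ land on many different vertices in many different components, and one must verify that the resulting sum depends only on the coarse parameters $r,k,t,p,q$ and not on finer data (how the $p-q$ leaf-edges are distributed among stars, whether some star has $u$ adjacent to both its center and several leaves, etc.). The right way to see this is to note that the matching polynomial recurrence $\mu(G,x)=\mu(G-e,x)-\mu(G-u-v,x)$ applied to one edge $e=uv$ at a time, combined with the fact that any component of $G-u$ is a star or a point, lets one induct on $p$; alternatively one can invoke the path-tree / explicit neighbor expansion and argue that each center-neighbor contributes a term proportional to $x^{k}\cdot(x^2-k)^{r-1}x^t\cdot x^{r(k-1)}$-type factor while each leaf-neighbor contributes a term missing one factor of $x^2-k$ but carrying an extra $(x^2-(k-1))$, and these differ in a way that produces exactly the $(p-q)(k-1)$ coefficient. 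Once the reduction to "it suffices to check one convenient representative in $\g(r,k,t;p,q)$" is justified, the remaining computation is a routine polynomial identity that I would verify by comparing coefficients of $x^4, x^2, x^0$ in the quartic factor, for instance by taking the representative where $u$ is joined to $q$ whole stars at their centers plus leaves as needed.
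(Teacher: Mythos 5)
Your overall strategy---expanding $\mu(G,x)$ at the vertex $u$ via the identity $\mu(G,x)=x\,\mu(G-u,x)-\sum_{v\sim u}\mu(G-u-v,x)$ together with multiplicativity over components---is a genuinely different route from the paper's. The paper instead pins down $(\pm\sqrt k)^{r-1}$ and $0^{r(k-1)+t-1}$ as roots by interlacing against $G-u=rK_{1,k}\cup tK_1$, and then determines the remaining quartic factor from two symmetric-function identities: the squares of the roots sum to (twice) the number of edges, and the product of the squares of the nonzero roots equals the number of maximum matchings, which it counts directly. Your direct expansion is, if anything, more elementary and self-contained, since it avoids the separate determination of the maximum matching size and of $m(G,r+1)$.

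That said, your write-up has concrete slips that must be repaired before the coefficients close up. First, the displayed identity is exactly enough, not ``not quite enough'': each summand $\mu(G-u-v,x)$ depends only on the isomorphism type of $(G-u)-v$, so the sum automatically depends only on how many neighbours of $u$ are star-centres, star-leaves, or isolated vertices; no induction on $p$ and no reduction to a convenient representative is needed, and your worry about how the leaf-edges are distributed among the stars is vacuous. Second, $K_{1,k}$ minus a leaf is $K_{1,k-1}$ (on $k$ vertices), not $K_{1,k-1}\cup K_1$; the spurious extra factor of $x$ you carry would wreck the degree count. Third, the multiplicities are $q$ (centres), $t$ (isolated vertices, since connectivity forces $u$ to be joined to all of them) and $p-q-t$ (leaves), not ``$q$ and $p-q$''. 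With these counts, after factoring $x^{r(k-1)+t-1}(x^2-k)^{r-1}$ out of all four kinds of terms, the bracket is
$$x^2(x^2-k)-qx^2-t(x^2-k)-(p-q-t)(x^2-k+1)=x^4-(p+k)x^2+(p-q)(k-1)+t,$$
which is the asserted quartic factor.
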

\begin{proof}{Let $G\in\g(r,k,t;p,q)$. So $G$ has a vertex $u$ such that $G-u=rK_{1,k}\cup tK_1$. Since $\mu(K_{1,k},x)=x^{k-1}(x^2-k)$, by interlacing, $\{(\pm\sqrt k)^{r-1},\,0^{r(k-1)+t-1}\}\subseteq R(G)$. Let $\pm\al$ and $\pm\be$ be the remaining elements of $R(G)$. Since the squares of the roots of matching polynomial of a graph sum to its number of edges, we have $\al^2+\be^2+(r-1)k=rk+p$.
We note that the multiplicity of zero is equal to the number of vertices missed by a fixed maximum matching.
The maximum matching of  $G$ is of size $r$ if $t=0$ and $p=q$ and it is of size $r+1$ otherwise.
As the product of the squares of the nonzero roots of matching polynomial of a graph is equal to its number of maximum matchings,
$$\al^2\be^2k^{r-1}=m(G,\,r+1)=tk^r+(p-t-q)(k-1)k^{r-1}.$$
Note that if $m(G,r+1)=0$, the above equality is still valid; both sides are zero.
 Therefore,
$$\al^2+\be^2=p+k,~~\hbox{and}~~~\al^2\be^2=t+(p-q)(k-1).$$
The result now follows.
}\end{proof}
With the same arguments as in the proof of Lemma~\ref{s(r,t)}, we can prove the following.
\begin{lem}\label{h(r,t)} For every $G\in\h(r,s,t;p,q,\ell)$,
 $$\mu(G,x)=x^{2r+s+t-1}(x^2-3)^{r+s-1}\left(x^4-(p+\ell+3)x^2+3t+2(p-t-q)+\ell\right).$$
\end{lem}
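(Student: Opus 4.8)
The plan is to transcribe the proof of Lemma~\ref{s(r,t)} almost verbatim, the only new ingredient being a slightly larger case analysis for the number of maximum matchings. Let $G\in\h(r,s,t;p,q,\ell)$ and let $u$ be the vertex used in the construction, so that $G-u=rK_{1,3}\cup sK_3\cup tK_1$. Since $\mu$ is multiplicative over disjoint unions and $\mu(K_{1,3},x)=x^2(x^2-3)$, $\mu(K_3,x)=x(x^2-3)$, $\mu(K_1,x)=x$, we get $\mu(G-u,x)=x^{2r+s+t}(x^2-3)^{r+s}$. By Lemma~\ref{inter} (interlacing), $\m(\sqrt3,G)\ge r+s-1$, $\m(-\sqrt3,G)\ge r+s-1$ and $\m(0,G)\ge 2r+s+t-1$. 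As $G$ has $4r+3s+t+1$ vertices, exactly four roots remain; they come in pairs $\pm\al,\pm\be$, so $\mu(G,x)=x^{2r+s+t-1}(x^2-3)^{r+s-1}(x^2-\al^2)(x^2-\be^2)$, and it remains to determine $\al^2+\be^2$ and $\al^2\be^2$.

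For the first, I would use (as in Lemma~\ref{s(r,t)}) that the squares of the matching roots of $G$ sum to $|E(G)|=3r+3s+p+\ell$; removing the contribution $3(r+s-1)$ of the roots $\pm\sqrt3$ gives $\al^2+\be^2=p+\ell+3$. For the second, recall that $\m(0,G)$ counts the vertices missed by a maximum matching and that the product of the squares of the nonzero roots is the number of maximum matchings. A matching of $G$ uses at most one edge from each star, at most one from each triangle, and at most one at $u$, so has size at most $r+s+1$; since each triangle sends at least one edge to $u$, the matching taking one edge in each star, one in each triangle, and matching $u$ into a triangle (whose two remaining vertices then form an extra edge) attains this, so a maximum matching has size $r+s+1$ and $\m(0,G)=2r+s+t-1$. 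Hence $3^{r+s-1}\al^2\be^2=m(G,r+s+1)$, and everything reduces to evaluating $m(G,r+s+1)$.

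This last count is the only genuinely new computation, and I expect the bookkeeping in it to be the main (indeed only) obstacle. I would classify a matching of size $r+s+1$ by the neighbour $v$ of $u$ that it matches to $u$: connectivity forces $u$ adjacent to all $t$ pendant vertices and to at least one vertex of each triangle, and its remaining neighbours in the $\g$-part are the $q$ star centres together with $p-q-t$ star leaves, so the cases are exhaustive. If $v$ is a star centre the matching cannot be maximum (that star has no free edge); if $v$ is a pendant vertex the stars and triangles are chosen freely, giving $t\,3^{r+s}$; if $v$ is a star leaf the affected star has $2$ admissible edges left, giving $2(p-q-t)3^{r+s-1}$; if $v$ lies in a triangle that triangle has $1$ edge left, giving $\ell\,3^{r+s-1}$. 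Summing, $m(G,r+s+1)=3^{r+s-1}\bigl(3t+2(p-t-q)+\ell\bigr)$, so $\al^2\be^2=3t+2(p-t-q)+\ell$; substituting the two symmetric functions into $(x^2-\al^2)(x^2-\be^2)$ yields the asserted formula. The delicate point is exactly making sure each non-centre neighbour of $u$ is counted once, either as a star leaf or as a triangle vertex; the rest is a word-for-word copy of the earlier proof. (If $s=0$ the construction degenerates to $\g(r,3,t;p,q)$, where a maximum matching may have size only $r$; but then $3t+2(p-t-q)=0$ too, so both sides of the last identity vanish and the stated polynomial still holds, in agreement with Lemma~\ref{s(r,t)}.)
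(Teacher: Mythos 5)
Your proof is correct and follows exactly the route the paper intends: the paper's entire "proof" of this lemma is the remark that it follows "with the same arguments as in the proof of Lemma~\ref{s(r,t)}", and you carry out precisely those arguments (interlacing against $G-u=rK_{1,3}\cup sK_3\cup tK_1$, the two symmetric functions of $\pm\al,\pm\be$ via edge count and maximum-matching count). Your explicit case analysis for $m(G,r+s+1)$ and your handling of the degenerate $s=0$ case correctly fill in the details the paper leaves unstated.
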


We distinguish some special graphs in the families $\g$ and $\h$ which are important for our purpose.
The family $\g(r,1,0;s,q)$  consists of a single graph which we denote it by  $S(r,s)$. Note that in this case $q$ is determined by $r$ and $s$, namely $q=s-r$. Its matching polynomial is
\begin{equation}\label{muS}
    \mu(S(r,s),x)=x(x^2-s-1)(x^2-1)^{r-1}.
\end{equation}
The family $\g(r,k,0;r,r)$  consists of a single graph which is denoted  by  $T(r,k)$. Its matching polynomial is
\begin{equation}\label{muT}
\mu(T(r,k),x)=x^{r(k-1)+1}(x^2-r-k)(x^2-k)^{r-1}.
\end{equation}
 We also denote the unique graphs in $\g(1,k,t;\ell+t,0)$ and  $\g(1,k,t;\ell+t+1,1)$ by $K(k,t;\ell)$ and $K'(k,t;\ell)$, respectively.
 \begin{figure}
 $$\begin{array}{ccccc}
   \includegraphics[width=2.2cm]{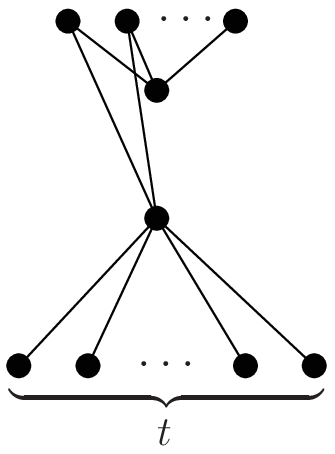} & \includegraphics[width=2.2cm]{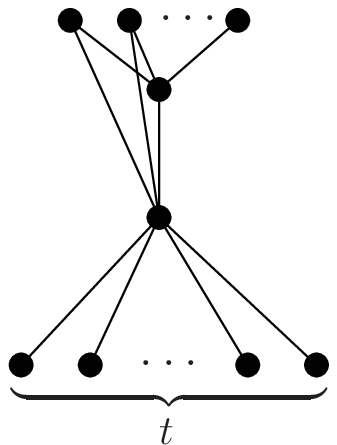} & \includegraphics[width=2.2cm]{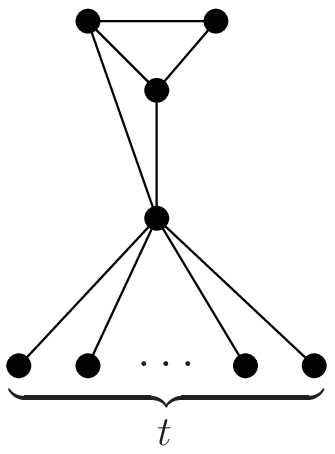} &
     \includegraphics[width=3.3cm]{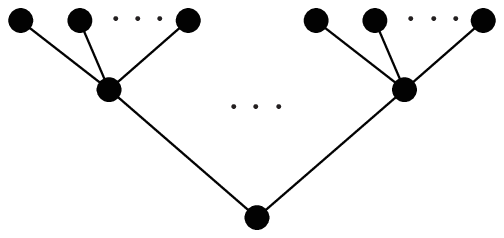}& \includegraphics[width=3.6cm]{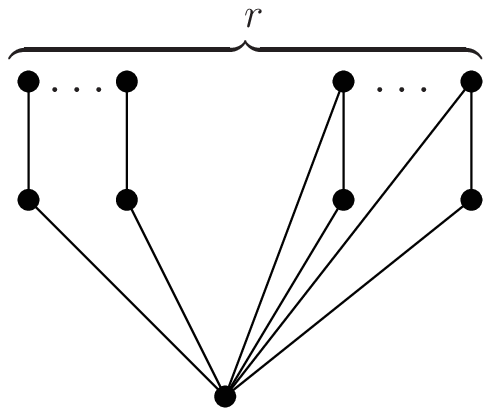}   \\
    K(k,t;\ell)& K'(k,t;\ell) & L(t,\ell)  &  T(r,k) & S(r,s)
 \end{array}$$
 \caption{The graphs $K(k,t;\ell)$, $K'(k,t;\ell)$,  $L(t,\ell)$, $T(r,k)$, and $S(r,s)$}\label{figK}
   \end{figure}
Their matching polynomials are
\begin{align} \mu(K(k,t;\ell),x)&=x^{k+t-2}\left(x^4-(k+t+\ell)x^2+(\ell+t)(k-1)+t\right),\label{muK}\\
 \mu(K'(k,t;\ell),x)&=x^{k+t-2}\left(x^4-(k+t+\ell+1)x^2+(\ell+t)(k-1)+t\right).\label{muK'}
\end{align}
Moreover, we show the unique graph in $\h(0,1,t;t,0,\ell)$ by $L(t,\ell)$ for $\ell=1,2,3$.
We have
\begin{equation}\label{muL}
\mu(L(t,\ell),x)=x^t\left(x^4-(t+\ell+3)x^2+3t+\ell\right).
\end{equation}
Typical graphs from the above families are demonstrated in Figure~\ref{figK}.

\begin{thm}\label{few} Let $G$ be a connected graph and $z(G)$ be the number of its distinct matching roots.
\begin{itemize}
  \item[\rm(i)] If $z(G)=2$, then $G\si K_2$.
    \item[\rm(ii)] If $z(G)=3$, then $G$ is either a star or $K_3$.
  \item[\rm(iii)] If $z(G)=4$, then $G$ is a non-star graph with $4$ vertices.
  \item[\rm(iv)] If $z(G)=5$, then $G$ is one of the graphs $K(k,t;\ell)$, $K'(k,t;\ell)$, $L(t,\ell)$, $T(r,k)$, $S(r,s)$, for some integers $k,r,s,t,\ell$, or a connected non-star graph with $5$ vertices.
\end{itemize}
\end{thm}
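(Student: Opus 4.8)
The plan is to first narrow down the possible shapes of $\mu(G,x)$, dispose of the cases $z(G)\le4$ directly, and then for $z(G)=5$ delete one carefully chosen vertex so as to force $G$ into the families $\g$ and $\h$.

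\emph{Shapes of $\mu(G,x)$.} Since $R(G)$ is symmetric about $0$ (the Remark), $\mu(G,x)=x^{c}\prod_i(x^2-\gamma_i^2)^{a_i}$ with $c=\m(0,G)$ and $\gamma_1>\gamma_2>\cdots>0$, so a bound on $z(G)$ bounds the number of these factors. The extra ingredient I need is that \emph{the largest matching root $\gamma_1$ of a connected graph is simple}: if $\m(\gamma_1,G)=a\ge2$, then Lemma~\ref{mult} provides a vertex $u$ with $\m(\gamma_1,G-u)=a+1$, yet by interlacing (Lemma~\ref{inter}) $\eta_i=\gamma_1\Rightarrow\theta_i=\gamma_1\Rightarrow i\le a$, so $\m(\gamma_1,G-u)\le a$ — a contradiction; symmetry of $R(G)$ then also gives $\m(-\gamma_1,G)=1$. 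Hence $\mu(G,x)$ equals $x^2-\al^2$ when $z(G)=2$; $x^c(x^2-\al^2)$ with $c\ge1$ when $z(G)=3$; $(x^2-\al^2)(x^2-\be^2)^b$ with $\al>\be>0$ when $z(G)=4$; and $x^c(x^2-\al^2)(x^2-\be^2)^b$ with $c\ge1$ and $\al>\be>0$ when $z(G)=5$.

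\emph{Parts {\rm(i)}--{\rm(iii)}.} For (i), $\mu(G,x)=x^2-\al^2$ has degree $2$, so $G\si K_2$. For (ii), the coefficient of $x^{n-4}$ in $x^c(x^2-\al^2)$ vanishes, so $m(G,2)=0$ and $G$ has matching number $1$; a connected graph with matching number $1$ is a star or $K_3$. For (iii), if $b\ge2$ then Lemma~\ref{mult} gives $u$ with $\m(\be,G-u)=b+1$, hence also $\m(-\be,G-u)=b+1$ by symmetry, so $\mu(G-u)$ would have at least $2b+2$ roots although its degree is $2b+1$; thus $b=1$, $\mu(G,x)=(x^2-\al^2)(x^2-\be^2)$ has degree $4$, and since $K_{1,3}$ is the only connected $4$-vertex graph with fewer than $4$ distinct matching roots, $G$ is a non-star graph on $4$ vertices.

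\emph{Part {\rm(iv)}, reduction to $\g$ and $\h$.} Now $\mu(G,x)=x^c(x^2-\al^2)(x^2-\be^2)^b$ with $c\ge1$ and $\al>\be>0$. If $b=1$ and $c=1$, then $|V(G)|=5$ and $G\ne K_{1,4}$ (which has $z=3$), so $G$ is a non-star graph on $5$ vertices. Otherwise I apply Lemma~\ref{mult} to $\theta=0$ when $b=1,c\ge2$ and to $\theta=\be$ when $b\ge2$, obtaining a vertex $u$ with $\m(0,G-u)=c+1$, respectively $\m(\be,G-u)=b+1$. Writing the spectrum of $G$ in non-increasing order, the interlacing inequalities together with the prescribed multiplicity of $\theta$ and the symmetry of $R(G-u)$ force $\mu(G-u)=x^{c+1}(x^2-\eta^2)$ for some $\eta\ne0$ in the first case, and $\mu(G-u)=x^{c-1}(x^2-\be^2)^{b+1}$ in the second. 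Since every component $C$ of $G-u$ then has $R(C)\subseteq\{0,\pm\eta\}$, respectively $\{0,\pm\be\}$, parts (i)--(iii) force each to be one of $K_1$, $K_2$, $K_{1,k}$, $K_3$, with all non-trivial components sharing the same positive root; thus $G-u$ is $rK_{1,k}\cup tK_1$ (reading $K_2$ as $K_{1,1}$) or $rK_{1,3}\cup sK_3\cup tK_1$, i.e.\ $G\in\g(r,k,t;p,q)$ or $G\in\h(r,s,t;p,q,\ell)$, with exactly one non-trivial component when $b=1$ and $b+1\,(\ge3)$ of them when $b\ge2$.

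\emph{Part {\rm(iv)}, identifying $G$.} It then remains to impose $z(G)=5$ on the matching polynomials of Lemmas~\ref{s(r,t)} and~\ref{h(r,t)}. When $b=1$ the factor $(x^2-k)^{r-1}$ (resp.\ $(x^2-3)^{r+s-1}$) is trivial, and $z(G)=5$ is exactly the requirement that the quartic factor of $\mu(G,x)$ have two distinct nonzero roots; this identifies $G$ with $K(k,t;\ell)$ or $K'(k,t;\ell)$ (according as $u$ misses or hits the centre of the star) or with $L(t,\ell)$, by \eqref{muK}, \eqref{muK'}, \eqref{muL}. When $b\ge2$ the factor $(x^2-k)^{r-1}$ (resp.\ $(x^2-3)^{r+s-1}$) already carries $\pm\be$ with multiplicity $\ge2$, so for $z(G)=5$ the quartic may contribute only $\pm\al$ and one further value in $\{0,\pm\al,\pm\be\}$; simplicity of $\al$ rules out a repeated $\pm\al$, and $\pm\be$ would make $\m(\be,G)$ equal $r$ (resp.\ $r+s$), which exceeds $b$ — so the quartic is $x^2(x^2-\al^2)$ and its constant term $(p-q)(k-1)+t$ (resp.\ $3t+2(p-t-q)+\ell$) vanishes. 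Connectivity forces $\ell\ge s\ge1$ in the $\h$ case, which is therefore impossible; in the $\g$ case it forces $t=0$, and then $G$ is $T(r,k)$ when $k\ge2$ and $S(r,p)$ when $k=1$, by \eqref{muT}, \eqref{muS}. I expect the main obstacle to be precisely this last stage: running the interlacing bookkeeping cleanly enough that a single deletion lands $G$ inside $\g$ or $\h$ (and matching the parameters against the admissible ranges \eqref{rtpk}), and then using the explicit matching polynomials, together with the simplicity of the largest root, to eliminate every graph in $\g$ and $\h$ other than the named ones — in particular all of $\h$ with $s\ge1$, and all $\g$-graphs with $t>0$ that are neither $K(k,t;\ell)$ nor $K'(k,t;\ell)$.
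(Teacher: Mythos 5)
Your proof is correct and follows essentially the same strategy as the paper: use Lemma~\ref{mult} together with interlacing to find a vertex $u$ for which $R(G-u)$ is completely determined, identify the components of $G-u$ via parts (i)--(iii) so that $G$ lands in $\g$ or $\h$, and then read off the admissible parameters from the explicit polynomials of Lemmas~\ref{s(r,t)} and~\ref{h(r,t)}. The only local differences are cosmetic: you prove simplicity of the largest root by pure interlacing (the paper compares counts of maximum matchings), you exclude $\pm\sqrt{k}$ as a root of the quartic by a multiplicity count rather than by direct computation, and you merge the paper's cases $(t=1,r\ge2)$ and $(t\ge2,r\ge2)$ into the single case $b\ge2$, recovering $S(r,s)$ and $T(r,k)$ together from the vanishing of the quartic's constant term.
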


\begin{proof}{
(i) If $z(G)=2$, then $R(G)=\{(\pm\al)^r\}$, for some $\al\ne0$. Thus, from Lemma~\ref{mult} it follows that  $r=1$, and so $G\si K_2$.

(ii) If $z(G)=3$, then $R(G)=\{(\pm\al)^r,0^s\}$, for some $\al\ne0$. From Lemmas~\ref{mult} and \ref{inter} it is seen that it is impossible that $r\ge2$.
So $r=1$ and we have $m(G,2)=0$. Hence, $G$ is either the star $K_{1,s+1}$ or $K_3$.

(iii) If $z(G)=4$, then $R(G)=\{(\pm\al)^r,\,(\pm\be)^s\}$, for some nonzero $\al,\be$.
Again, Lemmas~\ref{mult} and \ref{inter} imply that $r=s=1$.
 So $G$ has four vertices possessing  a matching of size 2. Thus $G$ is a connected non-star graph on four vertices. 

(iv)  If $z(G)=5$, then $R(G)=\{(\pm\al)^r,\,(\pm\be)^s,\,0^t\}$, for some nonzero $\al,\be$.
Assume that $\be>\alpha>0$. If $s\ge2$, then, by Lemma~\ref{mult},
for some vertex $u$,  $R(G-u)=\{(\pm\al)^{r-1},\,(\pm\be)^{s+1},0^{t-1}\}$.
Hence $$m(G-u,s+r)=\be^{2s+2}\al^{2r-2}>\be^{2s}\al^{2r}=m(G,s+r),$$ which is a contradiction.
Therefore, $s=1$.

 If $t=r=1$, then $G$ is a graph on 5 vertices with $m(G,2)>0$. So $G$ can be any  connected non-star graph on 5 vertices.

If $t=1$ and $r\ge2$, then, by Lemma~\ref{mult}, there exists a vertex $u$ such that  $R(G-u)=\{(\pm\al)^{r+1}\}$.
From part (i) it follows that  $\al=1$ and  so $G-u\si(r+1)K_2$. Thus, for some $\ell$, $G\si S(r+1,\ell)$.
By (\ref{muS}), $\ell=\be^2-1$. Therefore, $G\si S(r+1,\be^2-1)$.

If $t\ge2$ and $r=1$, then there is a vertex $u$ such that $R(G-u)=\{\pm\eta,\,0^{t+1}\}$, for some $\eta\ne0$. Therefore, $G-u$ is either  $K_{1,k}\cup t'K_1$, for some $k$ and $t'=t-k+2$, or $K_3\cup tK_1$.
If $G-u\si K_{1,k}\cup t'K_1$, then either $G\si K(k,t';\ell)$ or $G\si K'(k,t';\ell)$, for some $\ell$.
If $G-u\si K_3\cup tK_1$, then $G\si L(t,\ell)$ for some $\ell\in\{1,2,3\}$.

If $t\ge2$ and $r\ge2$, then, for some vertex $u$, $R(G-u)=\{(\pm\al)^{r+1},\,0^{t-1}\}$. It turns out that $\al=\sqrt k$, for some integer $k$.
It is seen that one of the following cases may occur:
\begin{itemize}
\item[(a)]  $G-u=r'K_{1,k}\cup t'K_1$ with $r'=r+1$ and $t'=t-1-r'(k-1)$; or
\item[(b)] $G-u=r_1K_{1,3}\cup r_2K_3\cup t'K_1$ with  $r_2>0$, $r_1+r_2=r+1$ and $t'=t-2r_1-r_2-1$.
\end{itemize}
 If (a) occurs, then $G\in\g(r',k,t';p,q)$, for some $p,q$. Note that the polynomial $x^4-(p+k)x^2+(p-q)(k-1)+t'$ under the conditions (\ref{rtpk}) has no $\pm\sqrt k$ root and it has zero root if and only if $t'=0$ and either $k=1$ or $p=q$. This means that $G$ has five nonzero distinct matching roots if and only if $t'=0$ and either $k=1$  or $p=q$. If $k=1$, then $t'=t-1\ge1$ and thus $G$ has more than five distinct matching roots, a contradiction. If $p=q$ and $t'=0$, then $G\si T(r',k)$.
If (b) occurs, then
$G\in\h(r_1,r_2,t';p,q,\ell)$ for some integers $p,q,\ell$.
Note that since $r_2>0$, we have $\ell>0$. Then it is easily seen that the polynomial $x^4-(p+\ell+3)x^2+3t'+2(p-t'-q)+\ell$ has neither 0 nor $\pm\sqrt3$ as a root.
Therefore, $\mu(G,x)$ has more than five distinct roots which is a contradiction.
}\end{proof}

\section{Characterization by matching polynomial}

In this section we characterize the graphs $S(r,s)$, $L(t,\ell)$, $K'(1,t;1)$, $K(1,t;1)$, and $K(k,t;\ell)$ with $|\ell+t-k|\le1$  by their matching polynomials.

\begin{rem}\label{l+t=k} The graphs $K(k,t;\ell)$ and $K(\ell+t,k-\ell;\ell)$ are isomorphic. The same is true for the graphs $K'$.
\end{rem}

\begin{thm}\label{K(k,t,l)} The graphs $K(k,t;\ell)$ with $|\ell+t-k|\le1$ are matching unique except for  $$(k,t,\ell)\in\{(2,1,1),(2,1,2),(2,2,1),(3,0,2),(3,1,1),(3,1,2),(3,0,2),(3,2,2),(3,0,3),(4,1,2),(4,3,1)\}.$$
\end{thm}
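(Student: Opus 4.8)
The plan is to show that $K(k,t;\ell)$ is matching unique by analyzing any graph $H$ that is comatching with it, using the structural classification in Theorem~\ref{few}. By \eqref{muK}, the polynomial $\mu(K(k,t;\ell),x)=x^{k+t-2}\bigl(x^4-(k+t+\ell)x^2+(\ell+t)(k-1)+t\bigr)$ has at most five distinct roots: the root $0$ (with multiplicity $k+t-2$, assuming $k+t\ge 2$), together with the four roots $\pm\alpha,\pm\beta$ of the quartic factor. One first checks when these five values are genuinely distinct (the quartic has no zero root precisely when $(\ell+t)(k-1)+t\ne 0$, i.e. when we are not in a degenerate star-like case, and its two positive roots are distinct when the discriminant is nonzero); the hypothesis $|\ell+t-k|\le 1$ together with excluding the listed exceptional triples should guarantee we are in the case $z(K(k,t;\ell))=5$. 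So any comatching graph $H$ also has $z(H)=5$, and $H$ must be connected (a disconnected graph comatching with a connected one would have $\mu(H,x)$ equal to a product of matching polynomials of its components, forcing the quartic factor to factor over such pieces — one rules this out by a multiplicity-of-zero and edge-count argument, or by noting $K(k,t;\ell)$ has $\le 5$ distinct roots and its quartic factor is irreducible in the relevant range). Hence by Theorem~\ref{few}(iv), $H$ is one of $K(k',t';\ell')$, $K'(k',t';\ell')$, $L(t',\ell')$, $T(r',k')$, $S(r',s')$, or a connected non-star graph on five vertices.

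Next I would eliminate the competing families one by one by comparing matching polynomials coefficient-wise, i.e. by matching the degree, the multiplicity of $0$, the number of edges (coefficient of $x^{n-2}$), and the number of maximum matchings (the constant or lowest-degree coefficient). Comparing $\mu(K(k,t;\ell),x)$ with \eqref{muS}, \eqref{muT}, \eqref{muL}, \eqref{muK'}: the form \eqref{muS} for $S(r,s)$ has the factor $(x^2-1)^{r-1}$, so it would force $\alpha=1$, which under $|\ell+t-k|\le 1$ happens only in small exceptional cases; similarly $\mu(T(r,k),x)$ carries a factor $(x^2-k)^{r-1}$ forcing $\alpha^2$ to be an integer equal to a degree parameter, again only possible in a handful of cases; $\mu(L(t',\ell'),x)$ has $0$-multiplicity exactly $t'$ and a specific quartic, leading to a finite Diophantine check; and $K'(k',t';\ell')$ differs from $K(k,t;\ell)$ only in the $x^2$-coefficient of the quartic (edge count $k+t+\ell$ versus $k'+t'+\ell'+1$) together with the constant term, which pins down a tight relation and leaves only the listed exceptions. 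The genuinely interesting competitor is another graph $K(k',t';\ell')$ in the same family: here Remark~\ref{l+t=k} is the key — the only coincidence $\mu(K(k,t;\ell),x)=\mu(K(k',t';\ell'),x)$ is the isomorphism $K(k,t;\ell)\cong K(\ell+t,k-\ell;\ell)$, and one must verify that the hypothesis $|\ell+t-k|\le 1$ selects a unique representative of each such pair (when $\ell+t-k=0$ the two are literally equal; when $|\ell+t-k|=1$ one checks the partner violates the bound or gives an isomorphic graph) — so no genuine comatching mate arises from within the family except in the exceptional triples where the quartic degenerates or two non-isomorphic parameter choices happen to coincide.

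Finally, the connected non-star graphs on five vertices must be handled separately: there are only finitely many (twenty of them), one lists their matching polynomials explicitly and checks which, if any, have the form \eqref{muK} — this will account for the small-parameter exceptions such as $(2,1,1)$, $(2,1,2)$, $(3,1,1)$ where $n=k+t+2\le 5$. The remaining exceptional triples $(2,2,1)$, $(3,0,2)$, $(3,1,2)$, $(3,2,2)$, $(3,0,3)$, $(4,1,2)$, $(4,3,1)$ should be exactly those where the coefficient comparisons above admit a second solution — e.g. where the quartic of some $K'$ or $L$ or $T$ or $S$ matches by accident — and one exhibits the actual comatching graph in each case to confirm the exception is real. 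I expect the main obstacle to be the within-family comparison in the second paragraph: one must argue cleanly that equality of the quartics $x^4-(k+t+\ell)x^2+(\ell+t)(k-1)+t$ and $x^4-(k'+t'+\ell')x^2+(\ell'+t')(k'-1)+t'$ together with $k+t=k'+t'$ (equal $0$-multiplicity) forces, under the two-sided bound, either $(k,t,\ell)=(k',t',\ell')$ or the Remark~\ref{l+t=k} swap — which is a two-variable Diophantine argument that must be organized carefully to see precisely where the eleven exceptions slip through.
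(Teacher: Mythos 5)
Your overall strategy is the same as the paper's: reduce to a candidate graph $H$ obtained from the comatching graph by deleting isolated vertices, classify $H$ via Theorem~\ref{few}, and eliminate each family by comparing the multiplicity of $0$, the edge count, and the number of maximum matchings, with Remark~\ref{l+t=k} handling the within-family coincidence. That part of the plan is sound and matches the paper.

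There is, however, a genuine gap: your claim that $H$ must be \emph{connected}. Your proposed justification --- that the quartic factor of $\mu(K(k,t;\ell),x)$ is ``irreducible in the relevant range'' --- is false. For $K(2,1;1)$ the quartic is $x^4-4x^2+3=(x^2-1)(x^2-3)$, and indeed $\mu(K(2,1;1),x)=\mu(K_{1,1}\cup K_3,x)$; similarly $K(4,3;1)$ is comatching with $K_{1,5}\cup K_3$. More precisely, since each component of $H$ containing an edge contributes at least one pair $\pm\theta$ of nonzero roots, and the quartic supplies only two such pairs, $H$ can have up to \emph{two} nontrivial components, each with exactly two distinct nonzero roots, hence each a star or a $K_3$ by Theorem~\ref{few}(i),(ii). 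So besides the connected candidates you must also treat $H\simeq K_{1,r}\cup K_{1,s}$ (which the paper kills by the edge-count versus zero-multiplicity comparison, forcing $\ell\le0$) and $H\simeq K_{1,r}\cup K_3$ (which yields the genuine exceptions $(2,1,1)$ and $(4,3,1)$ via the equation $k^2+(\e-6)k-3\e+8=0$). As written, your argument would wrongly conclude that $K(2,1;1)$ and $K(4,3;1)$ are matching unique, contradicting the list of exceptions in the statement itself. Aside from this omission (and the general reliance on ``one checks'' for the Diophantine eliminations, which the paper carries out explicitly), the proposal follows the paper's route.
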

\begin{proof}{ We first note that in a $K(k,t;\ell)$, $k\ge\ell$. By the assumption of the theorem, $\ell+t=k+\e$ for $\e\in\{-1,0,1\}$.
Let $G$ be a graph with $\mu(G,x)=\mu(K(k,t;\ell),x)$. If one removes the isolated vertices (if any) from $G$, the resulting subgraph $H$
must be one of the graphs described in Theorem~\ref{few}\,(iii),(iv), a union of two stars, or a union of an star and $K_3$.

If $H$ is a $K(m,s;q)$, then from (\ref{muK}) it is clear that
$$k+t\ge m+s,~~~2k+\e=m+s+q,~~~\hbox{and}~~(k+\e)(k-1)+t=(q+s)(m-1)+s.$$
Let $m=k+r$ for some integer $r$. Then $q+s=2k+\e-m=k+\e-r.$
Thus
\begin{align*}
    s&=(k+\e)(k-1)+t-(q+s)(m-1)\\&=t+(k+\e)(k-1)-(k+\e-r)(k+r-1)\\
    &=t+r^2-\e r-r.
\end{align*}
Therefore, $m+s=k+t+r^2-\e r$. Since $r^2-\e r\ge0$,  we have $m+s=k+t$ and $r^2-\e r=0$. So $r=0$ or $r=\e$. If $r=0$, then $m=k$, $s=t$, and $q=\ell$.
If $r=\e$, then $m=k+\e$, $s=t-\e$, and $q=\ell=k+\e-t$. But the graphs $K(k,t;k+\e-t)$ and $K(k+\e,t-\e;k+\e-t)$ are isomorphic by Remark~\ref{l+t=k}.

If $H$ is a $K'(m,s;q)$, then from (\ref{muK}) and (\ref{muK'}) we see that
$$k+t\ge m+s,~~~2k+\e=m+s+q+1,~~~\hbox{and}~~(k+\e)(k-1)+t=(q+s)(m-1)+s.$$
Let $m=k+r$ for some integer $r$. Then
\begin{equation}\label{q+s} q+s=k+\e-r-1.\end{equation}
Thus
\begin{align*}
    s&=(k+\e)(k-1)+t-(q+s)(m-1)\\&=t+(k+\e)(k-1)-(k+\e-r-1)(k+r-1)\\
    &=t+k+r^2-\e r-1.
\end{align*}
Therefore, $m+s=t+k+r^2-\e r+k+r-1$. On the other hand, $k+t\ge m+s$, $r^2-\e r\ge0$, and $k+r=m\ge1$.
So the equality must occur in all the above three inequalities.
It follows that $m=k=1$ and $r=0$. Hence, by (\ref{q+s}), $q+s=\e$. Since $q\ge1$, we find that $\e=q=1$ and $s=0$. Thus $t=0$ and $\ell=k+\e-t=2$, a contradiction.

 If $H$ is an $L(s,q)$, then, by (\ref{muL}),
 $$   k+t-2\ge s,~~~2k+\e=s+q+3~~~\hbox{and}~~(k+\e)(k-1)+t=3s+q.$$
From the last two equations  we have
$$2q+9=3(2k+\e)-(k+\e)(k-1)-t.$$
It follows that
\begin{align}
    2q+9+t&=4\e+(7-\e)k-k^2\label{q9t}\\
    &\le49/4+\e/2+\e^2/4.\nonumber
\end{align}
This holds only if $q\le2$. If $q=2$, then $t=0$ and $\e=1$ implying $\ell=k+1$, a contradiction. Therefore, $q=1$.
Now, it is easy to find all the values of $k,t,\e$ satisfying (\ref{q9t}). The comathing graphs obtained here are shown in Table~\ref{tab2}.

Now, if we consider $H$ to be a connected graph of order 4 or 5,
we come up with the right list of Table~\ref{tab2}.
\begin{table}
  \centering
  \begin{tabular}{cl}
\hline
   graph & comatching\\
   \hline
   $K(2,1;2)$ & $L(1,1)$\\
  $K(3,0;2)$ & $L(1,1)$\\
   $K(3,1;2)$ & $L(2,1)$\\
  $K(3,2;2)$ & $L(3,1)$\\
  $K(4,1;2)$ & $L(3,1)$\\
  $K(4,3;1)$ & $K_{1,5}\cup K_3$\\
  \hline
\end{tabular}
\hspace{1cm}
\begin{tabular}{cl}
\hline
   graph & comatching\\
   \hline
   $K(2,1;1)$ & $K_{1,1}\cup K_3$\\
   $K(2,1;2)$ & $5.16$\\
  $K(2,2;1)$ & $5.18\cup K_1$\\
  $K(3,0;2)$ & $5.16$\\
  $K(3,0;3)$ & $5.12$\\
  $K(3,1;1)$ & $5.18\cup K_1$\\
     \hline
\end{tabular}
  \caption{ The graphs $K(k,t;\ell)$ with $|\ell+t-k|\le1$ which are not matching unique.}\label{tab2}
\end{table}

If $H$ is a $K_{1,r}\cup K_{1,s}$, then we have $k+t\ge r+s$ and $\ell+t+k=r+s$ which implies that $\ell=0$, a contradiction.

If $H$ is a $K_{1,r}\cup K_3$, then
$$k+t-2\ge r,~~~\ell+t+k=r+3,~~~\hbox{and}~~(k+\e)(k-1)+t=3r.$$
The first two conditions imply that $\ell=1$, $t=k+\e-1$ and $r=2k+\e-3$.
Now, the third condition yield to $k^2+(\e-6)k-3\e+8=0$.
Hence $k=3-\e/2\pm\frac{1}{2}\sqrt{4+\e^2}$ and so $\e=0$ and $k=2,4$.
It follows that the graphs $K(2,1;1)$ and $K(4,3;1)$ are comatching with $K_{1,1}\cup K_3$ and $K_{1,5}\cup K_3$, respectively.

If $H$ is a $T(r,m)$, then $r=2$; since if $r\ge3$, then $T(r,m)$ has more than 4 non-zero roots and if $r=1$, then $T(r,m)$ is an star.
Thus, in view of (\ref{muT}),
$$k+t-2\ge2m-1,~~~2k+\e=2m+2,~~~\hbox{and}~~(k+\e)(k-1)+t=m(2+m).$$
The second condition implies that $\e=0$ and so $m=k-1$. Now the third condition gives $t=k-1$. This means that $K(k,k-1;1)$ and $T(2,k-1)$ have the same matching polynomial, but indeed they are isomorphic.

If $H$ is an $S(r,s)$, then with the same reason as above, $r\le2$. So $H$ is a graph with at most 5 vertices which is already considered.
}\end{proof}

We denote graph $K(1,t;1)$ and $K'(1,t;1)$ by $S(t)$ and $S'(t)$, respectively.

\begin{thm}\label{s(t)} For any integer $t\ge0$, the graph $ S(t)$ is matching unique unless $t\in\{2,3,4\}$; and $ S'(t)$ is matching unique unless $t\in\{2,3\}$.
\end{thm}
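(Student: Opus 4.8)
The plan is to identify $S(t)=K(1,t;1)$ and $S'(t)=K'(1,t;1)$ concretely as small graphs and then run the same bookkeeping that drove the proof of Theorem~\ref{K(k,t,l)}. First I would record the matching polynomials: by \eqref{muK} with $k=1,\ell=1$ we get $\mu(S(t),x)=x^{t-1}(x^4-(t+2)x^2+t)$, and by \eqref{muK'} with $k=1,\ell=1$ we get $\mu(S'(t),x)=x^{t-1}(x^4-(t+3)x^2+t)$. In both cases the quartic factor has two positive roots whose squares multiply to $t$ and sum to $t+2$ (resp.\ $t+3$); in particular the quartic has no root at $0$ for $t\ge1$ (the constant term is $t$), so $S(t)$ and $S'(t)$ have exactly five distinct matching roots when $t\ge1$, and we are squarely in the situation of Theorem~\ref{few}\,(iv). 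The case $t=0$ is trivial: $S(0)$ has only $4$ vertices and $S'(0)$ has only $4$ vertices, so they fall under Theorem~\ref{few}\,(iii) and can be checked directly. Note also that geometrically $S(t)=K(1,t;1)$ is a single vertex $u$ joined to the center (``$K_{1,1}$'') plus $t$ further pendant edges at $u$, i.e.\ $u$ has one neighbour of degree $2$ and $t$ neighbours of degree $1$; $S'(t)$ is the analogue with $u$ adjacent to both vertices of the $K_{1,1}$, giving a triangle at $u$ with $t$ extra pendants. These are exactly the graphs $K(k,t';\ell)$ with $k=1$, which the preceding theorem's proof already touched on.

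Next I would take a putative comatching graph $G$ with $\mu(G,x)=\mu(S(t),x)$ (resp.\ $\mu(S'(t),x)$), strip off isolated vertices to get $H$, and invoke Theorem~\ref{few}: $H$ is one of $K(m,s;q)$, $K'(m,s;q)$, $L(s,q)$, $T(r,m)$, $S(r,s)$, a connected $\le 5$-vertex non-star graph, a union of two stars, or a star plus $K_3$. For each family I would write down the constraints coming from matching the degree-of-the-quartic part of the polynomial, the sum of squares of roots (= number of edges), and the product of squares of the nonzero roots (= number of maximum matchings) — precisely the three equations used repeatedly in the proof of Theorem~\ref{K(k,t,l)} with the specialization $k=1$, $\ell=1$, $\e = t+1-k = t$. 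Since $k=1$, many of these families collapse quickly: e.g.\ a union of two stars forces $\ell=0$ (already noted), and the $K_{1,r}\cup K_3$, $T(r,m)$, $S(r,s)$ cases are constrained exactly as in the previous proof. The remaining possibilities are $H\in\{K(m,s;q),K'(m,s;q),L(s,q)\}$ and the small connected graphs; these must yield, after solving the resulting Diophantine conditions, only the exceptional values $t\in\{2,3,4\}$ for $S(t)$ and $t\in\{2,3\}$ for $S'(t)$, corresponding to the comatching pairs already listed in Table~\ref{tab2} (the entries $K(2,1;1)$, and so on, are precisely $S(1)$-type graphs — one should cross-check which table rows reduce to $\ell=1$, $k=1$).

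The last step is to verify that for the stated exceptional $t$ there genuinely exists a nonisomorphic comatching graph (so these are real exceptions, not artifacts), and that for all other $t$ every candidate $H$ is forced to be isomorphic to $S(t)$ (resp.\ $S'(t)$) itself. For the existence direction one exhibits the explicit small comatching graphs — these are finite checks using \eqref{muK}, \eqref{muK'}, \eqref{muL} and the tables of matching polynomials of graphs on $\le 6$ vertices; e.g.\ $S(2)$, $S(3)$, $S(4)$ each have a comate drawn from $\{L(s,q)\}$ or the small-graph list. For the uniqueness direction one shows that when $H$ lands in the $K$ or $K'$ family, the equations $1+s\le 1+t$, $2+t = 1+s+q$ (resp.\ $3+t=2+s+q$), and the product condition force $s=t$, $q=1$, $m=1$, i.e.\ $H\cong S(t)$ (using Remark~\ref{l+t=k} to absorb the symmetric solution), exactly as $r=0$ was forced in Theorem~\ref{K(k,t,l)}.

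The main obstacle is the $L(s,q)$ case together with the small-graph case: as in the proof of Theorem~\ref{K(k,t,l)}, the inequality derived from $3s+q=(k+\e)(k-1)+t$ with $k=1$ becomes $2q+\text{const} = $ (a bounded quantity in $t$ only when $k=1$ collapses the $k$-dependence), so $q$ is bounded and one is left with a short finite list of $(t,q)$ to examine by hand — and one must be careful to match these against the correct rows of Table~\ref{tab2} to land on exactly $\{2,3,4\}$ and $\{2,3\}$ and not miss or over-count an exception. I expect no conceptual difficulty beyond this careful case enumeration; the argument is a specialization of the one already given, and the bound $k=1$ makes every family except $L$ and the small graphs immediately rigid.
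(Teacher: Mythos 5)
Your overall strategy --- strip isolated vertices from a putative comate, classify the remaining graph $H$ via Theorem~\ref{few}, and solve the resulting equations family by family --- is exactly the paper's strategy, and your identification of the relevant polynomials and constraints is correct. But there is a genuine gap at the decisive step. You propose to settle the cases $H\simeq K(m,s;q)$ and $H\simeq K'(m,s;q)$ by running ``the same bookkeeping that drove the proof of Theorem~\ref{K(k,t,l)} with the specialization $k=1$, $\ell=1$, $\e=t$.'' That bookkeeping does not transfer: the proof of Theorem~\ref{K(k,t,l)} pins down $r$ by the inequality $r^2-\e r\ge 0$, which holds for all integers $r$ only because $|\e|\le 1$ there. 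Here $\e=\ell+t-k=t$ can be arbitrarily large (this is precisely why $S(t)$ needs a separate theorem), so $r(r-\e)\ge0$ fails for $0<r<t$ and that argument leaves a whole range of candidates unresolved. The paper instead substitutes $a=s+q$, $b=m-1$ and reduces the three constraints to $s+(a-1)(b-1)=0$ (resp.\ $=1$ for $K'$), whose integer solutions are immediately enumerable; some such reparametrization is needed, and your proposal supplies none --- it simply asserts that the Diophantine conditions ``must yield'' the exceptional sets $\{2,3,4\}$ and $\{2,3\}$, which assumes the conclusion.

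Two smaller points. First, your claim that $S(t)$ and $S'(t)$ have five distinct matching roots for all $t\ge1$ is false at $t=1$: the factor $x^{t-1}$ is trivial there, so $S(1)\simeq P_4$ has four distinct roots and must be handled with the order-$4$ graphs (the paper disposes of $t=0,1$ separately at the outset); likewise $S(0)$ and $S'(0)$ have three vertices, not four. Second, for $t\ge2$ you still need the observation that $x^4-(t+2)x^2+t$ (resp.\ $x^4-(t+3)x^2+t$) does not factor as $(x^2-r)(x^2-s)$ over the positive integers, which is what rules out comates with two nontrivial components (two stars, or a star plus $K_3$) and justifies reducing to a single connected $H$; you gesture at this but import the $\ell=0$ contradiction from the wrong theorem.
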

\begin{proof}{For $t=0,1$, we have  $ S(0)\si K_{1,2}$ and $ S(1)$ isomorphic to the path on 4 vertices which are matching unique.
Suppose that $t\ge2$ and
 $G$ be a graph with $$\mu(G,x)=\mu(S(t),x)=x^{t-1}(x^4-(t+2)x^2+t).$$
 It is easily seen that the polynomial $x^4-(t+2)x^2+t$ cannot be decomposed into $(x^2-r)(x^2-s)$ for some positive integers $r,s$.
Therefore, in view of Theorem~\ref{few}, $G$ consists of a connected component $H$ and possibly some isolated vertices such that $R(H)$ and $R( S(t))$ have the same nonzero elements. Whence $z(H)=4$ or $5$ and so $H$ is one of the graphs described in Theorem~\ref{few}\,(iii),(iv) excluding $S(r,s)$ and $T(r,k)$. Further, we have
\begin{equation}\label{m(H,2)}
    m(H,1)=m(H,2)+2,~~\hbox{and}~~t=m(H,2)\ge2.
\end{equation}

First, let $H$ be of order 4. From the Appendix table,  we see that, besides $4.6\si S(1)$, the only graph of order 4 satisfying (\ref{m(H,2)}) is
the graph 4.4, i.e. $K_{2,2}$, which corresponds to $t=2$. It follows that $ S(2)$ and $K_{2,2}\cup K_1$ are comatching.

Now let $H$ have 5 vertices. Among the connected graphs of order 5, from the Appendix table we see that the graphs
5.9 (corresponding to $t=4$), 5.15 (corresponding to $t=3$), and $5.20\si S(2)$ satisfy (\ref{m(H,2)}).
 Hence $S(3)$ and $S(4)$   are comatching with $5.15\cup K_1$ and $5.9\cup2K_1$, respectively.

If $H$ is an $L(s,q)$, then $t+2=s+q+3$ and $t=3s+q$ which implies that $2s-1=0$, a contradiction.

If $H$ is a $K(m,s;q)$, then $t+2=m+s+q$ and $t=(s+q)(m-1)+s$. Let $a=s+q$ and $b=m-1$. Then $s+ab=t=a+b-1$ and so
$$s+(a-1)(b-1)=0,~~~\hbox{with}~~a\ge1,~b\ge0.$$
If $a=1$, then $s=0$ and so $q=1$. Hence $m=t+1$ which gives the graph $K(t+1,0;1)$ which is isomorphic to $S(t)$.
So we assume that $a\ne1$. If $b=0$, then $m=1$ and $s+1-a=0$ which implies $q=1$ and $s=t$. This gives the graph $K(1,t;1)$ which is $S(t)$ itself. If $b=1$, then $m=2$ and $s=0$. So $1\le q=t\le2$.
If $q=t=1$, we have $K(2,0;1)$ isomorphic to $S(1)$ and if $q=t=2$, we obtain the graph $K(2,0;2)\cup K_1$ which is comatching with $S(2)$. If $b\ge2$, then $a=0$ implying  $q=0$  which is a contradiction.

If $H$ is a $K'(m,s;q)$, then $t+2=m+s+q+1$ and $t=(s+q)(m-1)+s$. Let $a=s+q$ and $b=m-1$. Then $s+ab=t=a+b$ and so
$$s+(a-1)(b-1)=1,~~~\hbox{with}~~a\ge1,~b\ge0.$$
If $a=1$, then $s=1$ and so $q=0$ which is impossible. So we assume that $a\ne1$. If $b=0$, then $m=1$ and $s+1-a=1$ implying $q=0$ which is again impossible.  If $b=1$, then $m=2$ and $s=1$. Hence either $q=1$, $t=3$ or $q=2$, $t=4$. It follows that $S(3)$ and $S(4)$ are comatching with $K'(2,1;1)\cup K_1$ and $K'(2,1;2)\cup2K_1$, respectively.
If $b=2$, then $m=3$ and $s=2-a=2-s-q$ implying that $2s=2-q$ which holds only if $q=2$ and $s=0$. Therefore, $t=4$ and we find that $S(4)$ is comatching with $K'(3,0;2)\cup2K_1$.
 If $b\ge3$, then $a=0$ which implies that $q=0$, a contradiction.

This completes the proof for $ S(t)$. The proof for $ S'(t)$ is similar.}
\end{proof}

\begin{rem} The graphs $L(t,1)$ and $L(t,3)$ are comatching with  $K(t+1,1;2)$ and $K'(t+2,0;3)$, respectively.
\end{rem}
\begin{thm} For any positive integer $t$, the graph $L(t,2)$  is  matching unique except for $t\in\{1,4,5,6\}$.
\end{thm}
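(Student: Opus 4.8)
The plan is to argue as in the proofs of Theorems~\ref{s(t)} and~\ref{K(k,t,l)}. Write $q_t(x)=x^4-(t+5)x^2+3t+2$, so that $\mu(L(t,2),x)=x^tq_t(x)$ by~(\ref{muL}). The first step is a dichotomy on the reducibility of $q_t$ over $\mathbb Q$. Being biquadratic, $q_t$ can only factor as $(x^2+ax+b)(x^2-ax+b)$ or as $(x^2-r)(x^2-s)$ with $r,s\in\mathbb Q$; the first requires $3t+2$ to be a perfect square, impossible since $3t+2\equiv2\pmod3$, and the second requires the discriminant $(t+5)^2-4(3t+2)=(t-1)^2+16$ to be a perfect square, which (solving $m^2-(t-1)^2=16$) happens exactly for $t\in\{1,4\}$. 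Hence $q_t$ is irreducible over $\mathbb Q$ for every $t\notin\{1,4\}$.

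Next I would dispose of the four exceptional values by exhibiting comatching graphs. Using~(\ref{muS}) one checks $\mu(L(1,2),x)=\mu(S(2,4),x)$, and using~(\ref{muK'}) one checks $\mu(L(t,2),x)=\mu\big(K'(4,2;t-2)\cup(t-4)K_1,\,x\big)$ for $t\in\{4,5,6\}$ -- here $K'(4,2;t-2)$ is a graph only for $t\le6$, since it needs $t-2=\ell\le k=4$. These pairs are non-isomorphic: $L(1,2)$ has a vertex of degree $1$ whereas $S(2,4)$ has minimum degree $2$; for $t\in\{5,6\}$ the graph $K'(4,2;t-2)\cup(t-4)K_1$ is disconnected while $L(t,2)$ is connected; and $K'(4,2;2)$ has smaller maximum degree than $L(4,2)$. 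So $L(t,2)$ is not matching unique for $t\in\{1,4,5,6\}$.

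For the remaining values, fix $t\notin\{1,4\}$ and suppose $\mu(G,x)=x^tq_t(x)$. Since $q_t$ is irreducible, its four non-zero roots cannot be split between two non-trivial components, so, arguing as in the proof of Theorem~\ref{s(t)}, $G$ consists of a single connected graph $H$ together with some isolated vertices, and $\mu(H,x)=x^{\,|V(H)|-4}q_t(x)$. In particular $m(H,1)=t+5$, $m(H,2)=3t+2$, $m(H,k)=0$ for $k\ge3$, $z(H)\in\{4,5\}$, and $|V(H)|\le t+4$. By Theorem~\ref{few}, $H$ is then a connected non-star graph on $4$ or $5$ vertices, or one of $K(k,t';\ell)$, $K'(k,t';\ell)$, $L(t',\ell)$; the families $S(r,s)$ and $T(r,k)$ are ruled out because the irreducible $q_t$ must divide $\mu(H,x)$, whereas $\mu(S(r,s),x)$ and $\mu(T(r,k),x)$ factor over $\mathbb Q$ into factors of degree at most $2$.

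The last step treats these cases one by one. A $4$-vertex graph has $m(\cdot,2)\le3<3t+2$, so it cannot occur; a $5$-vertex $H$ forces $t+5=m(H,1)\le\binom52=10$, i.e.\ $t\le5$, and the few remaining values are settled by inspecting the Appendix table. If $H=L(t',\ell)$, comparing~(\ref{muL}) with $\mu(H,x)$ forces $\ell=2$ and $t'=t$, hence $H=L(t,2)$ and $G\si L(t,2)$. For $H=K(k,t';\ell)$ one obtains, from~(\ref{muK}), $k+t'+\ell=t+5$ and $(\ell+t')(k-1)+t'=3t+2$; expressing $t'$ and $\ell$ as quadratics in $k$ and imposing the validity conditions $1\le\ell\le k$, $t'\ge0$ and $t-(k+t'-2)\ge0$ (the number of isolated vertices) leaves no admissible $k$. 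For $H=K'(k,t';\ell)$ the same reduction from~(\ref{muK'}) leaves only $k=4$, $t'=2$, $\ell=t-2$ with $t-4$ isolated vertices, which is a graph only for $4\le t\le6$; as $t\ne4$ here, this merely recovers the exceptions $t\in\{5,6\}$ and gives nothing for $t\le3$ or $t\ge7$. Thus $G\si L(t,2)$ for every $t\notin\{1,4,5,6\}$. I expect the main obstacle to be exactly this last step: the two-parameter bookkeeping in the $K$- and $K'$-cases, where one must solve the quadratic Diophantine relations in $k$ while simultaneously respecting the constraints $1\le\ell\le k$, $t'\ge0$ and a non-negative number of isolated vertices, together with the finite but delicate verification of the $5$-vertex candidates against the Appendix.
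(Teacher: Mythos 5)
Your overall architecture is the same as the paper's: reduce to a single non-trivial component $H$, run through the candidates supplied by Theorem~\ref{few}, and solve the resulting Diophantine conditions in the $K$-, $K'$-, $L$-, $T$-, $S$-cases. The one genuinely different ingredient is your irreducibility dichotomy for $q_t(x)=x^4-(t+5)x^2+3t+2$: for $t\notin\{1,4\}$ it lets you dismiss in one stroke the disconnected candidates ($K_{1,r}\cup K_{1,s}$, $K_{1,r}\cup K_3$) and the families $S(r,s)$, $T(r,k)$, all of which the paper eliminates by separate edge/root counts. That is a clean improvement, and your list of comatching witnesses for $t\in\{1,4,5,6\}$ checks out against (\ref{muS}) and (\ref{muK'}).

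The gap is in the step you wave at with ``the few remaining values are settled by inspecting the Appendix table.'' That inspection is not innocuous, and if you actually perform it your conclusion fails at $t=2$: you need a connected $5$-vertex $H$ with $m(H,1)=7$ and $m(H,2)=8$, and graph $5.8$ of the Appendix has matching polynomial $x^5-7x^3+8x$, so $\mu(5.8\cup K_1,x)=x^2(x^4-7x^2+8)=\mu(L(2,2),x)$ while $5.8\cup K_1$ is disconnected and $L(2,2)$ is connected. Hence $L(2,2)$ is \emph{not} matching unique, contradicting your final sentence ``Thus $G\si L(t,2)$ for every $t\notin\{1,4,5,6\}$.'' (The paper's own proof in fact records exactly this pair --- ``$L(2,2)$ is comatching with $5.8\cup K_1$'' --- so the exceptional set in the statement ought to contain $t=2$ as well; your write-up asserts the stated conclusion without the computation that would have exposed the discrepancy.) A secondary, lesser issue of the same kind: the $K(k,t';\ell)$ and $K'(k,t';\ell)$ eliminations are only asserted (``leaves no admissible $k$''); they do work out, and agree with the paper's equations $(s+q-3)(4-m)-s=1$ and $(s+q-3)(4-m)-s+2=0$, but as written they are claims, not proofs.
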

\begin{proof}{ For $t=1$, the graph $L(t,2)$ is the graph 5.10 of the Appendix table and it is  comathing with 5.11.
  So we assume that $t\ge2$. 

Let $G$ be a graph with $$\mu(G,x)=\mu(L(t,2),x)=x^t\left(x^4-(t+5)x^2+3t+2\right).$$
If one removes the isolated vertices (if any) from $G$, the resulting subgraph $H$
must be one of the graphs described in Theorem~\ref{few}\,(iii),(iv), a union of two stars, or a union of an star and $K_3$.

 If $H$ is a $K_{1,r}\cup K_{1,s}$, then we have $r+s+2\le t+4$. On the other hand, counting the number of edges, $t+5=r+s$, a contradiction.
 If $H$ is a $K_{1,r}\cup K_3$, then we have $t+5=r+3$ and $3t+2=3r$ which is impossible.
 If $H$ is an $S(r,s)$, then $\pm1\in R(H)$ which is not the case for $t\ge2$.
 If $H$ is  a $T(r,k)$, then as in the proof of Theorem~\ref{K(k,t,l)}, we have $r=2$. Thus from (\ref{muT}) we see that $t\ge2k-1$ and $t+5=2k+2$, a contradiction.
 If $H$ is  an $L(t',\ell')$, then we have $t+5=t'+\ell'+3$ and $3t+2=3t'+\ell'$ which obviously implies that $t=t'$ and $\ell'=2$.

 If $H$ is  a $K(m,s;q)$, then
 $$t\ge m+s-2,~~~t+5=m+s+q~~~\hbox{and}~~3t+2=(s+q)(m-1)+s.$$
  From the first two conditions we see that $q\ge3$ and from the last two conditions, $1=(s+q-3)(4-m)-s$. It is straightforward to see that this equation has no solution under the condition $m\ge q\ge3$ and $s\ge0$.

 If $H$ is  a $K'(m,s;q)$, then
  $$t\ge m+s-2,~~~t+5=m+s+q+1~~~\hbox{and}~~3t+2=(s+q-1)(m-1)+s.$$
   From the first two conditions we see that $q\ge2$ and from the last two conditions,
 $$(s+q-3)(4-m)-s+2=0 ~~\hbox{subject to}~~m\ge q\ge2~~\hbox{and}~~s\ge0.$$
It is not hard to find all the solutions of the above equation.
It turns out that the graphs $L(4,2)$,  $L(5,2)$ and  $L(6,2)$ are comatching with $K'(4,2;2)$, $K'(4,2;3)\cup K_1$ and $K'(4,2;4)\cup2K_1$, respectively. Note that $L(6,2)$ is also comatching with $K'(5,0;5)\cup3K_1$.

Now, if we consider $H$ to be a connected graph of order 4 or 5,
we find that the graphs $L(1,2)$ and $L(2,2)$ are comatching with $5.11$ and $5.8\cup K_1$.}
\end{proof}

We remark that if $r\le r'\le s\le2r$, then the graphs $ S(r',s)$ and $ S(r,s)\cup(r'-r)K_2$ have the same matching polynomials. So in general, for a given $s$, the graphs $ S(r,s)$ are not matching unique unless for the smallest value of $r$ such that $ S(r,s)$ can be defined (these include the friendship graph $F_n\si S(n,2n)$). This is shown below.

\begin{thm}\label{s(s,s)} For any positive integer $s$,  the graph $ S(\lceil\frac{s}{2}\rceil,s)$  is  matching unique except for $s\in\{3,4,5\}$. In particular, for any positive integer $n\ne2$,  the friendship graph $F_n$ is matching unique.
\end{thm}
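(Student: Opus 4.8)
The plan is to follow the pattern of the preceding theorems of this section, reducing everything to a bounded computation. Set $r=\lceil s/2\rceil$, so by (\ref{muS}) we must classify the graphs $G$ with
$$\mu(G,x)=x(x^2-s-1)(x^2-1)^{r-1}.$$
The values $s=1,2$ are immediate, since $S(1,1)\si K_{1,2}$ and $S(1,2)\si K_3$ are matching unique by Theorem~\ref{few}. For $s\in\{3,4,5\}$ I would simply exhibit a comatching graph: by (\ref{muL}), $L(1,s-2)\cup(r-2)K_2$ has the same matching polynomial (this is legitimate because then $s-2\in\{1,2,3\}$ and $r-2\ge0$), so these three values are genuine exceptions; I would also list the remaining comatching graphs that occur for them (further ones arise from $K$, $K'$ and from small graphs). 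The substance of the theorem is matching uniqueness for $s\ge6$.

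So suppose $s\ge6$ (hence $r\ge3$) and $\mu(G,x)=x(x^2-s-1)(x^2-1)^{r-1}$. First I would peel off the essential component. The positive matching roots of $G$ are $\sqrt{s+1}$ with multiplicity one and $1$ with multiplicity $r-1$, and $\sqrt{s+1}\ne1$; hence $\sqrt{s+1}\in R(H)$ for exactly one component $H$ of $G$, while every other non-trivial component has all of its roots in $\{-1,0,1\}$ and is therefore $\si K_2$ by Lemma~\ref{-1}. Thus
$$G\si H\cup aK_2\cup bK_1,\qquad \mu(H,x)=x^{1-b}(x^2-s-1)(x^2-1)^{r-1-a},$$
where $H$ is connected, $H\not\si K_1,K_2$, and $b\in\{0,1\}$, $0\le a\le r-1$ because $\mu(H,x)$ is a polynomial. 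Reading off the roots of $H$ shows $z(H)\in\{3,4,5\}$, so the classification of Theorem~\ref{few} applies to $H$.

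Next I would eliminate every possibility for $H$ other than $S(r,s)$. If $z(H)=3$ then $H$ is a star or $K_3$ by Theorem~\ref{few}(ii), and matching $\mu(H,x)$ forces $s\le2$. If $z(H)=4$ then $H$ is a connected non-star graph on four vertices by Theorem~\ref{few}(iii), so $\mu(H,x)=(x^2-s-1)(x^2-1)$, which would require $m(H,2)=m(H,1)-1$; none of the five such graphs has this property. If $z(H)=5$ then, by Theorem~\ref{few}(iv), $H$ is one of $S(r',s')$, $T(r',k)$, $K(k,t;\ell)$, $K'(k,t;\ell)$, $L(t,\ell)$, or a connected non-star graph on five vertices; using (\ref{muT})--(\ref{muL}) and comparing coefficients I would rule out all but the first: for $T(r',k)$ the multiplicity of $0$ forces $k=1$ and then $r'=s$, $a=r-s<0$; for $L(t,\ell)$ the multiplicity of $0$ forces $t=1$ and then $\ell=s-2$, which is impossible for $s\ge6$ since $\ell\le3$; for $K(k,t;\ell)$ and $K'(k,t;\ell)$ the multiplicity of $0$ forces $k+t=3$, whence $\ell=s-1$ respectively $\ell=s-2$, contradicting the defining constraint $\ell\le k+1\le4$ respectively $\ell\le k\le3$; and for a five-vertex $H$ one needs $m(H,1)=s+2\le\binom{5}{2}=10$ together with $m(H,2)=s+1=m(H,1)-1$, and a brief inspection of the graphs on five vertices with $8$, $9$ or $10$ edges (that is, $K_5$ with at most two edges removed) shows this fails for $s\in\{6,7,8\}$. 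In the remaining case $H\si S(r',s')$, matching polynomials gives $s'=s$ and $r'=r-a$; but $S(r',s')$ is defined only when $r'\ge\lceil s/2\rceil=r$, so $a=0$, $H\si S(\lceil s/2\rceil,s)$, and then $b=0$ as well, i.e.\ $G\si S(\lceil s/2\rceil,s)$. Finally the ``in particular'' statement is the case $s=2n$, because $F_n\si S(n,2n)=S(\lceil 2n/2\rceil,2n)$ and the only even member of $\{3,4,5\}$ is $4$, i.e.\ $n=2$.

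I expect the main obstacle to be the end-game bookkeeping: being certain that no special graph with small parameters, and especially no connected five-vertex graph for $s\in\{6,7,8\}$, accidentally realises the pair $\bigl(m(H,1),m(H,2)\bigr)=(s+2,s+1)$, and that the list of comatching graphs recorded for $s\in\{3,4,5\}$ is complete and stated correctly. A small but genuinely important point is that $L(t,\ell)$ is only defined for $\ell\in\{1,2,3\}$, which is exactly why the exceptional set is $\{3,4,5\}$ and no larger.
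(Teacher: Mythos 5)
Your overall strategy is the same as the paper's: write $G$ as one connected component $H$ together with copies of $K_2$ and at most one isolated vertex, observe that $H$ must carry the roots $\pm1$ and $\pm\sqrt{s+1}$, and run $H$ through the classification of Theorem~\ref{few}. Your treatment of $s\ge6$ is correct and in places more explicit than the paper's (which disposes of the non-$S(r,s)$ families in one stroke by noting that any such graph on more than five vertices has $\m(0,H)\ge2$, and then consults the Appendix table for the five-vertex candidates); your parameter computations for $T$, $L$, $K$, $K'$, the constraint $r'\ge\lceil s/2\rceil$ forcing $a=0$ in the $S(r',s')$ case, and the $(m(H,1),m(H,2))=(s+2,s+1)$ check for five-vertex graphs with $s\in\{6,7,8\}$ all check out.

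There is, however, a genuine error in your treatment of the exceptional values: your witness for $s=3$ does not work. The graph $L(1,1)$ is a triangle with a path on two further vertices attached to one of its vertices, and so is $S(2,3)$ (of the two $K_2$'s, the one joined to the centre by two edges forms the triangle and the one joined by a single edge forms the tail); hence $L(1,1)\si S(2,3)$, and exhibiting it proves nothing about matching uniqueness. This is consistent with the paper's Table~\ref{tab2} and the Remark preceding the theorem on $L(t,2)$: $L(1,1)$ is comatching with $K(2,1;2)$, and so is $5.16\si S(2,3)$; since the Appendix lists only two connected five-vertex graphs with polynomial $x^5-5x^3+4x$, necessarily $L(1,1)\si S(2,3)$. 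A correct witness for $s=3$ is $K(2,1;2)$, the four-cycle with a pendant edge (graph 5.17 of the Appendix). Your witnesses for $s=4$ and $s=5$, namely $L(1,2)$ and $L(1,3)\cup K_2$, really are non-isomorphic to $S(2,4)$ and $S(3,5)$ (different degree sequences, respectively disconnectedness) and are fine. So the gap is confined to the claim that $s=3$ is an exception; it is repaired by replacing the witness, but as written that part of the argument is invalid, and your hedge that further comatching graphs ``arise from $K$, $K'$ and from small graphs'' is exactly where the missing witness lives.
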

\begin{proof}{For $s=1,2$, the graphs $ S(\lceil\frac{s}{2}\rceil,s)$ are isomorphic to $K_{1,2}$ and $K_3$, respectively, which are obviously matching unique. So let $s\ge3$.
Let $G$ be a graph with $\mu(G,x)=\mu( S(\lceil\frac{s}{2}\rceil,s),x)$. Thus $\m(0,G)=1$ and $\pm\sqrt{s+1}\in R(G)$ for $s\ge3$.
It follows that $G$ has neither $K_3$ nor $K_{1,s+1}$ as a component.
Therefore from Theorem~\ref{few} it is seen that $G$ consists of a connected component $H$ and possibly some copies of $K_2$ and at most one isolated vertices such that $\{\pm1,\pm\sqrt{s+1}\}\subseteq R(H)$.
From the Appendix table we find that no graph $H'$ with $z(H')=4$ has $\pm1$ as roots of its matching polynomial. Hence $z(H)=5$, and
\begin{equation}\label{R(H)}
 R(H)=\{0,\pm1,\pm\sqrt{s+1}\},~~\hbox{for some $s\ge3$}.
\end{equation}
 Thus, $H$ is one of the graphs of Theorem~\ref{few}\,(iv).
 If $H$ is an $S(r',s')$, then it readily follows that $s'=s$ and $r'=\lceil\frac{s}{2}\rceil$.
  If $H$ is a graph of Theorem~\ref{few}\,(iv), not an $S(r,s)$, with more than 5 vertices, then $\m(0,H)\ge2$ which is impossible.
  It follows that $H$ is a non-star 5-vertex graph.
 From the Appendix table we see that only graphs of order 5 satisfying (\ref{R(H)}) are the graphs 5.5, 5.6, 5.10, 5.11, 5.16, and 5.17 of that table.
 The graph 5.16 is isomorphic to $ S(2,3)$, and 5.11 is isomorphic to $ S(2,4)$. It turns out  that the graph $ S(2,4)$ is comatching with 5.10, $ S(2,3)$ is comatching with 5.17, and $ S(3,5)$ is comatching with the union of a $K_2$ with either 5.5 or 5.6.
}\end{proof}

\noindent{\bf Acknowledgements.} The research of the author was in part supported by a grant from IPM (No. 90050117). The author is grateful to the referees whose helpful comments improved the presentation of the paper.

\begin{table}
\textbf{\large Appendix.}~Connected graphs up to five vertices and their matching polynomial

\vspace{.8cm}

\centering
{\small
  \begin{tabular}{ccc}
\hline
   label& graph & matching polynomial  \\
   \hline
     2.1&$\begin{array}{c} \includegraphics[width=1.3cm]{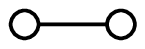}\end{array}$&$x^2-1 $ \\  \hline \vspace{-.3cm}
 3.1& $\begin{array}{c}\includegraphics[width=1.4cm]{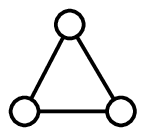}\end{array}$ &$x^3-3x$ \\ \vspace{-.2cm}
  3.2&$\begin{array}{c}\includegraphics[width=1.4cm]{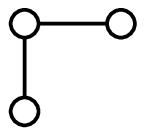}\end{array}$ &$x^3-2x $ \\ \hline \vspace{-.3cm}
4.1& $\begin{array}{c}\includegraphics[width=1.4cm]{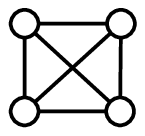} \end{array}$&$x^4-6x^2+3 $ \\ \vspace{-.3cm}
 4.2&$\begin{array}{c} \includegraphics[width=1.4cm]{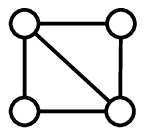} \end{array}$&$x^4-5x^2+2 $ \\ \vspace{-.3cm}
4.3&$\begin{array}{c}  \includegraphics[width=1.4cm]{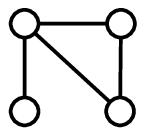}\end{array}$&$x^4-4x^2+1 $ \\ \vspace{-.3cm}
4.4& $\begin{array}{c} \includegraphics[width=1.4cm]{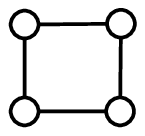} \end{array}$&$x^4-4x^2+2 $ \\ \vspace{-.3cm}
 4.5&$\begin{array}{c}\includegraphics[width=1.4cm]{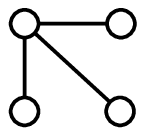} \end{array}$&$x^4-3x^2 $\\ \vspace{-.2cm}
  4.6&$\begin{array}{c}\includegraphics[width=1.4cm]{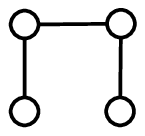}\end{array}$&$x^4-3x^2+1 $\\ \hline \vspace{-.4cm}
 5.1& $\begin{array}{c}\includegraphics[width=1.7cm]{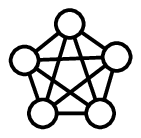}\end{array}$&$x^5-10x^3+15x$ \\ \vspace{-.55cm}
  5.2& $\begin{array}{c}  \includegraphics[width=1.7cm]{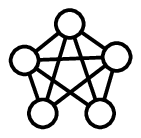} \end{array}$&$x^5-9x^3+12x $ \\ \vspace{-.55cm}
  5.3& $\begin{array}{c} \includegraphics[width=1.7cm]{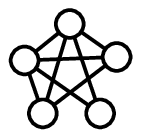} \end{array}$&$x^5-8x^3+9x $ \\ \vspace{-.55cm}
  5.4& $\begin{array}{c} \includegraphics[width=1.7cm]{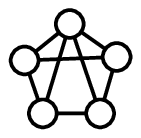}\end{array}$&$x^5-8x^3+10x$ \\ \vspace{-.55cm}
  5.5& $\begin{array}{c} \includegraphics[width=1.7cm]{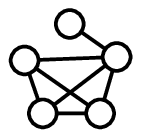}\end{array}$&$x^5-7x^3+6x$ \\ \vspace{-.2cm}
   5.6& $\begin{array}{c}\includegraphics[width=1.7cm]{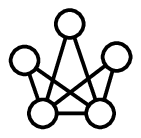}\end{array}$&$x^5-7x^3+6x$ \\ \hline
 \end{tabular}
 \hspace{.5cm}
  \begin{tabular}{ccc}
\hline
  label& graph & matching polynomial \\
   \hline
   \vspace{-.6cm}
    5.7&  $\begin{array}{c}\includegraphics[width=1.7cm]{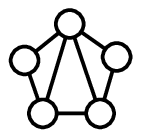}\end{array}$ &$x^5-7x^3+7x$ \\ \vspace{-.6cm}
   5.8& $\begin{array}{c}\includegraphics[width=1.7cm]{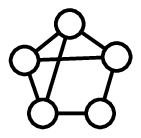}\end{array}$ &$x^5-7x^3+8x$ \\ \vspace{-.6cm}
 5.9& $\begin{array}{c} \includegraphics[width=1.7cm]{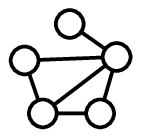}\end{array}$ &$x^5-6x^3+4x$ \\ \vspace{-.6cm}
  5.10&$ \begin{array}{c} \includegraphics[width=1.7cm]{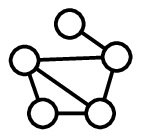} \end{array}$&$x^5-6x^3+5x$ \\ \vspace{-.6cm}
  5.11& $\begin{array}{c} \includegraphics[width=1.7cm]{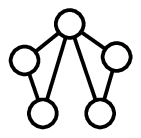}\end{array}$&$x^5-6x^3+5x$ \\ \vspace{-.6cm}
  5.12&$ \begin{array}{c} \includegraphics[width=1.7cm]{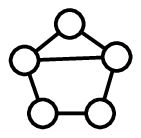}\end{array}$ &$x^5-6x^3+6x$ \\ \vspace{-.6cm}
 5.13& $\begin{array}{c} \includegraphics[width=1.7cm]{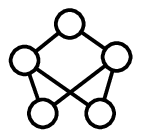}\end{array}$ &$x^5-6x^3+6x$\\ \vspace{-.6cm}
 5.14& $ \begin{array}{c} \includegraphics[width=1.7cm]{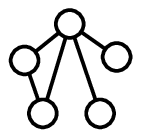}\end{array}$&$x^5-5x^3+2x$ \\ \vspace{-.6cm}
  5.15&$\begin{array}{c}  \includegraphics[width=1.7cm]{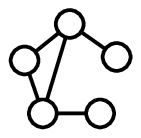}\end{array}$& $x^5-5x^3+3x$\\ \vspace{-.6cm}
  5.16& $\begin{array}{c} \includegraphics[width=1.7cm]{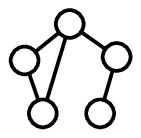}\end{array}$ &$x^5-5x^3+4x$\\ \vspace{-.6cm}
  5.17& $\begin{array}{c} \includegraphics[width=1.7cm]{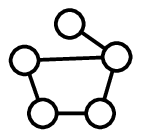}\end{array}$ &$x^5-5x^3+4x$\\ \vspace{-.6cm}
  5.18& $\begin{array}{c} \includegraphics[width=1.7cm]{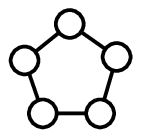}\end{array}$& $x^5-5x^3+5x$\\\vspace{-.6cm}
  5.19& $\begin{array}{c} \includegraphics[width=1.7cm]{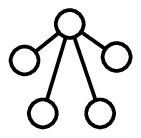}\end{array}$& $x^5-4x^3$\\ \vspace{-.6cm}
   5.20&$ \begin{array}{c} \includegraphics[width=1.7cm]{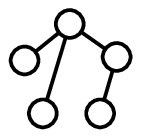} \end{array}$&$x^5-4x^3+2x$ \\ \vspace{-.3cm}
     5.21&$ \begin{array}{c}\includegraphics[width=1.7cm]{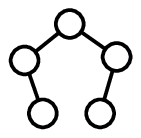} \end{array}$&$x^5-4x^3+3x$\\ \hline
 \end{tabular}
}\end{table}

\end{document}